\newtheorem{observation}{Remark}[section]
\newtheorem{lemm}[observation]{Lemma}  
\newtheorem{theo}[observation]{Theorem}
\newtheorem*{theo*}{Theorem}
\newtheorem*{prop*}{Proposition}
\newtheorem{nota}[observation]{Notation}
\newtheorem{prop}[observation]{Proposition} 
\newtheorem{coro}[observation]{Corollary}
\newtheorem*{nota*}{Notation}
\theoremstyle{definition}
\newtheorem{defi}[observation]{Definition}
\theoremstyle{remark}
\newtheorem{rema}[observation]{Remark}
\newtheorem{ex}[observation]{Example}
\def\P{\mathcal P}
\def\Q{\mathcal Q}
\def\Com{\operatorname{Com}}
\def\uCom{\operatorname{uCom}}
\def\SuCom{\mathfrak{uCom}}
\def\SMagCom{\mathfrak{MagCom}}
\def\D{\operatorname{D}}
\def\SD{\mathfrak{D}}
\def\Lev{\operatorname{Lev}}
\def\SLev{\mathfrak{Lev}}
\def\C{\mathcal C}
\def\mod{\mathsf{mod}}
\def\alg{\mathsf{alg}}
\def\K{\mathcal K}
\def\Sym{\mathsf{Sym}}
\def\Hom{\mathsf{Hom}}
\def\Set{\mathsf{Set}}
\def\0{\mathbf 0}
\def\1{\mathbf 1}
\def\N{\mathbb N}
\def\Z{\mathbb Z}
\def\F{\mathbb F}
\def\t{\mathfrak t}
\def\id{\operatorname{id}}
\def\∑{\mathfrak S}
\def\Vect{\mathsf{Vect}_{\F}}
\def\Forget{\operatorname{Forget}}
\def\diag{\xymatrix}
\def\Dp{\D^{\pm}}
\def\Ind{\operatorname{Ind}}
\def\Res{\operatorname{Res}}
\def\A{\mathcal A}
\def\U{\mathcal U}
\def\scr{\mathcal}
\title{Unstable algebras over an operad II}
\author{Sacha Ikonicoff}
\begin{document}
    \maketitle
    \begin{abstract}
        We introduce a notion of unstable algebra over an operad in general characteristic. We show that the unstable algebra freely generated by an unstable module is itself a free algebra under suitable conditions. We introduce a family of ‘$q$-level' operads which allows us to identify unstable modules studied by Brown--Gitler, Miller and Carlsson to free unstable $q$-level algebras.  
    \end{abstract}
    \tableofcontents
    \section{Introduction}\label{sec:intro}
    In this article, we define unstable algebras over an operad in positive characteristic, and we characterise free unstable algebras. The mod $p$ Steenrod algebra $\A$ was introduced by Steenrod and Epstein \cite{SE} to study the stable operations of the classical cohomology functors with coefficients in the finite field $\F_p$ of order~$p$. Unstable modules are a class of (graded) $\A$-modules which satisfy a property called instability. A detailed survey of unstable modules and their properties can be found in Schwartz's textbook \cite{LS}. The principal source of examples of unstable modules is precisely the mod $p$ cohomology of topological spaces. These unstable modules coming from topology are endowed with an additional internal multiplication - the \textit{cup-product} - which is associative and commutative. The category $\U$ of unstable modules is understood fairly well. It has injective cogenerators, the Brown--Gitler modules, which can be obtained as the `Spanier-Whitehead dual' to the cohomology of a spectrum - the Brown--Gitler spectrum \cite{BG,HM}. Other injective objects of interest include the Carlsson modules~\cite{GC,LZ}, which are obtained as a limit of Brown--Gitler modules. The Carlsson modules were introduced in \cite{GC} to prove the Segal conjecture for Burnside rings of elementary abelian groups, and were used by Miller \cite{HM} to prove the Sullivan conjecture on maps from classifying spaces. Both the Brown--Gitler and the Carlsson modules are endowed with a multiplication which naturally arises when studying their structure. It has been shown that certain of these modules, with their respective multiplication, are in fact free objects in a specified category of (non-associative) algebras \cite{DD, SI2}.

    In \cite{SI2}, we define a notion of unstable algebra over an operad in characteristic two. For an operad $\P$ and a commutative operation $\star\in\P(2)$, a $\star$-unstable $\P$-algebra is an unstable module $A$ over the mod 2 Steenrod algebra, endowed with an action of $\P$ satisfying the following identity, called instability:
\[\forall x\in \A^i\quad Sq^ix=x\star x.\]
    When $\star$ is the operadic generator of the operad $\Com$ of commutative, associative (unital) algebras, we recover the classical definition of an unstable algebra.
    
    The main result of \cite{SI2} shows the existence, under some hypotheses, of an isomorphism of graded $\P$-algebras between the free $\star$-unstable $\P$-algebra generated by an unstable module $M$ and the free $\P$-algebra generated by the underlying graded vector space of the unstable module $\Sigma\Omega M$, where $\Sigma$ is the suspension functor and $\Omega$ is left adjoint to~$\Sigma$. The only condition that the operad $\P$ must satisfy for this result to hold is that the operation $\star$ has to be $\P$-central, which means that it satisfies a certain interchange relation with respect to all other operations in $\P$.

    Our aim is to extend the main result of \cite{SI2} to the odd characteristic case. We study modules and algebras over the Steenrod algebra of reduced $q$-th powers \cite{K1}. This algebra is generated by elements $P^i$ under certain relations called the Adem relations. We consider operads $\P$ in vector spaces equipped with a $q$-ary operation $\star\in\P(q)^{\∑_q}$. We identify $\P$-algebras in the symmetric monoidal category of modules over a bialgebra and, in particular, $\P$-algebras in unstable modules. A $\P$-algebra $A$ in unstable modules is called $\star$-unstable if it also satisfies:
    \[
        \forall x\in \A^i\quad P^ix=\star(\underbrace{x,\dots,x}_q).
    \]
    In contrast with \cite{SI2}, here we highlight the algebraic structures involved, and in particular the key notion of $\P$-centrality for the operation $\star$. An important feature of $\P$-centrality is that, if $\star$ is $\P$-central, then for any $\P$-algebra $A$, the linear map $a\mapsto \star(a,\dots,a)$, is a morphism of $\P$-algebras (the precise statement is contained in Lemma \ref{lemm:psiext}). We then build a functor $K_\P^{\star}$, which sends an unstable module $M$ to the free $\star$-unstable $\P$-algebra generated by $M$. We obtain the following:
    \begin{theo*}[Theorem \ref{theo:freeunst}]
       Let $\star\in\P(q)^{\∑_q}$ be a $\P$-central operation. For all connected reduced unstable module $M$, the underlying $\P$-algebra of $K_{\P}^\star(M)$ is the free algebra over the underlying vector space of $\Sigma\Omega M$.
   \end{theo*}
    Here, the conditions of being reduced and connected for an unstable module are defined in \ref{def:unstredconnsus}. 
    
    Note that the proof for this result is somewhat more straightforward than the one of the analogous result in characteristic $2$ given in \cite{SI2}. It relies on the algebraic and categorical framework described in the first part of this article, and more precisely on the results obtained in Section \ref{sec:centop} on central operations. A previous version of the present article contained a proof of this result which followed the steps of \cite{SI2}, and containing the additional results necessary to adapt the proof to the odd characteristic case. Following the suggestions of an anonymous referee, we choose here to present a somewhat more elegant, shorter proof, that encompass all positive characteristic cases.

    We then apply this result to identify some algebra structures on analogues of the Brown--Gitler modules, which are injective cogenerators of the category of unstable modules, and on analogues of the Brown--Gitler algebra, the Carlsson modules and the Carlsson algebra. We consider a $q$-ary operation on the Brown--Gitler modules and on the Carlsson modules, which corresponds to the classical multiplication studied by Carlsson \cite{GC} in characteristic 2, and by Miller \cite{HM} in characteristic~$p>2$. This multiplication of arity $q$ is fixed by the action of $\∑_q$, and satisfies an additional relation of strong commutativity, but it is not associative. In order to study this operation, we introduce an operad $\Lev_q$ whose algebras are called $q$-level algebras, and which generalises the operad $\Lev$ of level algebras of Chataur and Livernet \cite{CL}.

    We first define the operad $\Lev_q$ as a suboperad of a certain operad of ordered partitions. We then obtain a presentation of this operad which yields the following characterisation:
    \begin{theo*}[Theorem \ref{theo:Lev}]
        The operad $\Lev_q$ is generated by one element $\star\in\Lev_q(q)$, fixed under the action of $\∑_q$, and satisfying the unique following relation:
    \[(\star\circ_2\star)\circ_1\star=(\star\circ_2\star)\circ_1\star \cdot \sigma,\]
    where $\sigma\in\∑_{3q-2}$ is the transposition $(q\  q+1)$.
    \end{theo*}
    
    Identifying the structure of free unstable $q$-level algebras yields, for instance, the following result, which is analogous to a result of Davis \cite{DD} in characteristic 2, and gives the explicit algebraic structure for the Carlsson module of weight 1:
\begin{prop*}[Proposition \ref{prop:K1}]
 The Carlsson module of weight one $K(1)$, with its $\Lev_q$ operation, is isomorphic to the free $\star$-unstable $\Lev_q$-algebra generated by~$F(1)$.
\end{prop*}
We also use the naturality of our constructions to compare free unstable algebras over operads related by operadic morphisms.

The notions introduced in this article give an opportunity to revisit classical results on unstable modules and algebras from the point of view of operads. For instance, we have obtained results concerning free algebras generated by unstable modules equipped with an unstable ‘twisted' action which generalise results of Campbell--Selick \cite{CS} and Duflot--Kuhn--Winstead \cite{DKW}. These results are available in former versions of the present article, and we hope to publish them in future works. Other possible applications of these result stay open, including the link between unstable algebras over an operad and the analytic functors viewpoint of Hennes--Lannes--Schwartz \cite{HLS}, or with the generic representations viewpoint of \cite{K1,K3}.

    \begin{nota*}
        \begin{itemize}
            \item Our base field will be denoted by $\F$. Unless clearly defined otherwise, it has characteristic $p$ and order $q=p^\alpha$.
            \item The category of $\F$-vector space will be denoted by $\Vect$.
            \item The set $\{1,\dots,n\}$ will be denoted by $[n]$.
            \item The symmetric group on $n$ letters (or permutation group of $[n]$) will be denoted by $\∑_n$.
        \end{itemize}
    \end{nota*}
    \part{Algebraic background}
    In this first part of the article, we review the algebraic structures necessary to study unstable algebras over an operad. We introduce the notion of $\P$-central operation for an operad $\P$, and we define a family of operads $\Lev_q$, for which we obtain a presentation in Theorem \ref{theo:Lev}.
    \section{Recollections about operads}\label{sec:recop}
    
    In this section we present the notions of symmetric sequences, operads, and their algebras in a general category $\C$. We assume that the reader has some basic familiarity with these notions. Our main references on the subject are the two textbook accounts \cite{LV,BF2}, as well as the article \cite{GK}. Later in this article, we will be interested in operads and algebras in the category of sets, in the category $\Vect$ of $\F$-vector spaces, in the category of left module over a (cocommutative) $\F$-bialgebra, and in the category $\U(q)$ of unstable modules.

    For this section, fix a symmetric monoidal category $(\C,\otimes,I)$ with all small colimits and all small limits. Assume that $\otimes$ preserve colimits. In particular, if $G$ is a group, we can define the notion of a $G$-object in $\C$ as objects of $\C$ with an action of $G$. For a $G$-object $X$, we can define the object of $G$-orbits $X_G$, and if $H<G$ is a subgroup, we have a restriction functor $\Res_H^G$ from $G$-objects to $H$-objects, and this functor has a left adjoint $\Ind_h^G$ called induction, which can be obtain as a Kan extension. For more details on this, we refer the reader to \cite[Chapter 2]{BF2}.
    \begin{defi}[Symmetric sequences {\cite[Section 2.1]{BF2}}]
    A \textbf{symmetric sequence} $\mathcal M$ is a sequence $(\mathcal M(n))_{n\in\N}$ of objects in $\C$ such that, for all $n\in\N$, the symmetric group $\∑_n$ acts on $\mathcal M(n)$ on the right. $\mathcal M(n)$ is said to be the object of \textbf{arity} $n$ operations. Symmetric sequences form a category~$\Sym$. This category is endowed with a tensor product $\otimes_{\Sym}$ such that, if $\mathcal M$ and $\mathcal N$ are two symmetric sequences, then
    \[\left(\mathcal M\otimes_{\Sym}\mathcal N\right)(n)=\coprod_{i+j=n}\Ind_{\∑_i\times\∑_j}^{\∑_n}\mathcal M(i)\otimes \mathcal N(j),\]
    where $\Ind_{\∑_i\times\∑_j}^{\∑_n}$ denotes the induced representation from the Young subgroup $\∑_i\times\∑_j$ of the group~$\∑_n$.
    
    The category of symmetric sequences is endowed with another monoidal product $\circ$, called the \textbf{composition} of symmetric sequences, given by:
    \[\left(\mathcal M\circ\mathcal N\right)(n)=\coprod_{k\ge0}\mathcal M(k)\otimes_{\∑_k} (\mathcal N^{\otimes_{\Sym} k}(n)),\]
    with unit $I$ concentrated in arity 1. This unit is denoted by $I$ as well. When it is suitable to talk about elements, if $\nu\in\mathcal M(k)$, and $\xi_1,\dots,\xi_k\in\mathcal N$, let $(\nu;\xi_1,\dots,\xi_k)$ denote the element 
    \[
      \left[\nu\otimes \xi_1\otimes\dots\otimes \xi_k\right]_{\∑_k}\in\mathcal M\circ\mathcal N.
    \]
\end{defi}
\begin{defi}[Operads {\cite[Section 3.1]{BF2}}]
     An \textbf{operad} is a monoid object in the monoidal category of symmetric sequences~$(\Sym,\circ,I)$. For an operad $\P$, $\gamma_{\P}\colon\P\circ\P\to\P$ denotes its composition morphism, and $\1_{\P}:I\to\P$ its unit.
     
     For $\nu\in\P(n)$, $\xi_1,\dots,\xi_n\in\P$, let $\nu(\xi_1,\dots,\xi_n)$ denote the element 
    \[
      \gamma_\P(\nu;\xi_1,\dots,\xi_n)\in \P.
    \]
    The \textbf{partial compositions} in an operad $\P$ are defined by:
    \[\nu\circ_i\xi=\nu(1_\P,\dots,1_\P,\underbrace{\xi}_{i\mbox{\scriptsize{-th input}}},1_\P,\dots,1_\P),\] 
    for all $i\in\{1,\dots,m\}$, where $m$ is the arity of~$\nu$.

    For any operad $\P$, let $S(\P,-)$ denote the associated functor $\C\to\C$, defined on objects by:
    \[S(\P,X)=\coprod_{n\ge0}\P(n)\otimes_{\∑_n}X^{\otimes n}.\] 
    It can also be defined as $S(\P,X)=\P\circ X$ where $X$ is considered as a symmetric sequence concentrated in arity~$0$. 

    The unit and compositions in $\P$ induce a monad structure on $S(\P,-)$.
\end{defi}
\begin{defi}[Algebras over an operad {\cite[Section 3.2]{BF2}}]\label{defalgop}
    For an operad $\P$, a \textbf{$\P$-algebra} is an algebra over the monad~$S(\P,-)$. In other words, a $\P$-algebra is a pair $(A,\theta)$ where $A$ is an object of $\C$ and $\theta\colon S(\P,A)\to A$ is compatible with the composition and unit of~$\P$. 
    
    For $(A,\theta)$ a $\P$-algebra, $\nu\in\P(n)$ and $a_1,\dots,a_n\in A$, let $\mu(a_1,\dots,a_n)$ denote the element $\theta(\mu;a_1,\dots,a_n)\in A$. In the case where $a=a_1=\dots=a_n$, we use the notation $a^{\mu n}$ for the elements $(\mu;a,\dots,a)\in S(\P,A)$ and $\theta(\mu;a,\dots,a)\in A$ depending on the context.

    Let $\P_{\alg}$ denote the category of $\P$-algebras.
\end{defi}  
    \begin{ex}\label{ex:operads} Here are basic example in the category $\Set$ of sets, with tensor product given by the cartesian product:
        \begin{itemize}
            \item There is an operad $\SuCom$ such that, for all $n$, $\SuCom(n)$ is a singleton $\{*_n\}$ equipped with a trivial action of $\∑_n$, with unit $*_1\in\SuCom(1)$ and with composition given by identities of singletons. $\SuCom$-algebras are exactly unital, commutative, associative monoids.
            \item If $\mathfrak A$ is a unital, associative monoid, then $\mathfrak A$ can be considered as an operad concentrated in arity $1$. $\mathfrak A$-algebras correspond to sets with a (left) $\mathfrak A$-action.
        \end{itemize}
        Extending the examples of operads in set by linearity induce examples of operads in $\F$-vector spaces. Here are some basic examples of operads in $\F$-vector spaces:
        \begin{itemize}
            \item Denote by $\uCom=\F \SuCom$, the linearisation of $\SuCom$. One has $\uCom(n)=\F$ for all $n\in\N$, the unit of this operad is $1\in\F=\uCom(1)$, compositions are given by identities of $\F$, and $\uCom$-algebras are unital, associative, commutative $\F$-algebras. Let $X_n\in\uCom(n)$ denote the generator of the vector space $\uCom(n)$.
            \item If $A$ is a unital, associative $\F$-algebra, then $A$ can be considered as an operad concentrated in arity $1$. $A$-algebras correspond to the classical notion of $A$-(left)-modules.
        \end{itemize}
    \end{ex}
    We now introduce the notion of a distributive law for product of operads. Note that distributive laws are usually defined in a different yet equivalent way {\cite[Section 8.6.1]{LV}}.
\begin{defi}[Distributive Law]\label{def:distrlaw}
    Let $\P$, $\Q$ be two operads. A \textbf{distributive law} is a morphism $\lambda:\Q\circ\P\to \P\circ \Q$ in $\Sym$ such that the symmetric sequence $\P\circ \Q$, with unit $(1_\P;1_\Q)$, and with composition map given by:
    \[
    \diag{\P\circ\Q\circ\P\circ\Q\ar[rr]^-{\P\circ\lambda\circ\Q}&&\P\circ\P\circ\Q\circ\Q\ar[rr]^-{\gamma_P\circ\gamma_Q}&&\P\circ \Q},
    \]
    is an operad.
\end{defi}
    
    \section{Modules over a bialgebra}\label{sec:modbialg}
    In this section, we study the category of (left) modules over a bialgebra. This category is classically equipped with a tensor product which is symmetric as soon as the bialgebra is cocommutative. This will allow us to study operads in this category, and their algebras. Our main reference for the notion of bialgebras and their modules is the textbook account \cite{CKa}.
    
    The primary purpose of this article is to study unstable modules over the Steenrod algebra. In \cite{JM}, Milnor shows that this algebra is an example of cocommutative bialgebra. This will allow us, in Section \ref{sec:unstalg}, to study algebras over the Steenrod algebra using the notion of operads.

    \begin{defi}[Bialgebra {\cite[Section III.2]{CKa}}]
         A \textbf{bialgebra} is a quintuple $(B,\mu,\eta,\Delta,\epsilon)$ where $(B,\mu,\eta)$ is a unital associative $\F$-algebra, $(B,\Delta,\epsilon)$ is a counital, coassociative $\F$-coalgebra, such that $\epsilon$ and $\Delta$ satisfy compatibility conditions that make them into algebra morphisms.

        The category of (left) modules over a bialgebra $B$ will be denoted by $B_{\mod}$.
    \end{defi}
    \begin{defi}[Tensor product of $B$-modules {\cite[III.5]{CKa}}]\label{def:tensormod}
        Let $M$ and $N$ be two $B$-modules. We define a $B$-module structure on $M\otimes N$ by:
        \[
        \diag{B\otimes M\otimes N \ar[rr]^-{\Delta\otimes M\otimes N}&&B\otimes B\otimes M\otimes N\ar[rr]^-{B\otimes \tau\otimes N}&&B\otimes M\otimes B\otimes N\ar[r]&M\otimes N},
        \]
        where $\tau:B\otimes M\to M\otimes B$ is the symmetry isomorphism of the tensor product in $\Vect$ and the last map is given by the $B$-module structures on $M$ and $N$.

        This provides a tensor product that we still denote by $\otimes$ on $B$-modules.
    \end{defi}
    \begin{lemm}[{\cite[Proposition 111.5.1]{CKa}}]
        If $B$ is cocommutative, then the tensor product of Definition \ref{def:tensormod} is symmetric.
    \end{lemm}
    \begin{defi}[$\P$-algebras in $B$-modules]\label{def:PalginBmod}
        Let $B$ be a cocommutative bialgebra, and $\P$ be an operad in $\F$-vector spaces. Consider $\P$ as an operad in the symmetric monoidal category $B_{\mod}$ by declaring that $B$ acts trivially on $\P$. A \textbf{$\P$-algebra in $B$-modules} is an algebra over the resulting operad in $B_{\mod}$

        $\P$-algebras in $B$-modules form a category denoted by $\P_{\alg}^B$.
    \end{defi}
    \begin{rema}\label{rema:distr.law}
        $\P$-algebras in $B$-modules can also be defined as $\P\circ B$ algebras with a certain distributive law $\lambda:B\circ \P\to \P\circ B$.
        
        Denote by $\Delta^{n-1}=(B^{\otimes n-2}\otimes\Delta)\circ\dots\circ (B\otimes \Delta)\circ\Delta:B\to B^{\otimes n}$ the $n-1$-th iterated coproduct. In arity $n$, the map $\lambda:(B\circ \P)(n)\to(\P\circ B)(n)$ is the composite:
        \[
            \lambda:\diag{(B\circ \P)(n)=B\otimes \P(n)\ar[r]&\P(n)\otimes B\ar[r]^{\P\otimes \Delta^{n-1}}&\P(n)\otimes B^{\otimes n}\ar[r]&\P(n)\otimes_{\∑_n}B^{\otimes n}=(\P \circ B)(n)},
        \]
    \end{rema}
    where the first arrow is the braiding of the tensor product, and the third map is the projection on orbits by the action of $\∑_n$.

    We now present a basic example of bialgebra and identify the associated modules and $\P$-algebras. This example and all the variations introduced here are fundamental for the rest of this article. In particular, the bialgebras $T_s\D$, $Q_s\D$ and $\Dp$ will appear in Section \ref{sec:BGC} to compare certain unstable algebras to known unstable modules. A more involved example of bialgebra is given by the Steenrod algebra. This example is discussed in Section \ref{sec:Steenrodalg}.
    \begin{ex}\label{ex:bialgebras}
    Let $\D=\F[d]$ denote the polynomial algebra in one indeterminate $d$. Following \cite[III.2.Example 2]{CKa}, we can equip $\D$ with a bialgebra structure such that $\Delta(z d^n)=z(d^n\otimes d^n)$ for all $n\in \N$, $z\in\F$. The $\D$-modules are vector spaces $V$ equipped with a linear map $d:V\to V$. The $\P$-algebras in $\D$-modules are the $\P$-algebras $A$ equipped with a $\P$-algebra morphism $d:A\to A$.

            We can construct many variations of the previous example. For instance:
        \begin{itemize}
            \item Let $T_sD$ denote the quotient of $\D$ by the ideal generated by $d^{s+1}$. The bialgebra structure of $\D$ induces a bialgebra structure on $T_s\D$. $T_s\D$-modules (resp. $\P$-algebras in $T_sD$-modules) are $\D$-modules (resp. $\P$-algebras in $\D$-modules) such that $d$ is nilpotent of degree $s+1$.
            \item Let $Q_sD$ denote the quotient of $\D$ by the ideal generated by $d^{s}-1$. The bialgebra structure of $\D$ induces a bialgebra structure on $Q_s\D$. $Q_s\D$-modules (resp. $\P$-algebras in $Q_sD$-modules) are $\D$-modules (resp. $\P$-algebras in $\D$-modules) such that $d$ is cyclic of degree $s+1$.
            \item Let $\Dp=\D[d,d^{-1}]$, the algebra of Laurent polynomials in one indeterminate. The bialgebra structure of $\D$ extends into a bialgebra structure on $\Dp$. $\Dp$-modules (resp. $\P$-algebras in $\Dp$-modules) are $\D$-modules (resp. $\P$-algebras in $\D$-modules) such that $d$ is an isomorphism.
        \end{itemize}
    \end{ex}
    \section{Operad of partitions}\label{sec:levpart}
    In this section, we introduce and study a particular example of operad, the operad of (ordered) partitions $\Pi$. We show that this operad is isomorphic to a composition product of two operads equipped with a distributive law. This operad $\Pi$ will be crucial in order to define the operad of $q$-level algebras in Section \ref{sec:Lev}, and will be used in Section \ref{sec:BGC} to identify certain unstable algebras over the Steenrod algebra.
    \begin{defi}
        An \textbf{ordered partition} $J$ of the set $[n]=\{1,\dots,n\}$ is a sequence $J=(J_i)_{i\in\N}$ of piecewise disjoint subsets of $[n]$ such that $\bigcup_{i\in\N}J_i=[n]$.

        Since $[n]$ is finite, there is an integer $s$ such that $J_s\neq\emptyset$ and $J_{s'}=\emptyset$ for all $s'>s$. We will often write $J=(J_0,\dots,J_s)$, omitting the empty sets that follow.

        Let $j,k,l$ be three positive natural number such that $l\le j$. Denote by $\lambda_{l,k}^j:[j]\to[j-1+k]$ the set map defined by
        \[
            \lambda_{l,k}^j(m)=\begin{cases}
            m,&\mbox{if }m\le l,\\
            m-1+k &\mbox{if }m>l.
            \end{cases}
        \]
        In other words, $\lambda_{l,k}^j$ ‘‘skips'' all numbers between $l+1$ and $l-1+k$.
    \end{defi}
    \begin{defi}[The operad of partitions $\Pi$]
        There is an operad $\Pi$ in sets, called the operad of partitions, such that, for all $n\in\N$, $\Pi(n)$ is the set of ordered partitions $J=(J_i)_{0\le i\le s}$ of the set $[n]$. The unit of the operad is the partition $(\{1\})\in\Pi(1)$ of $[1]$, and the partial compositions are induced by:
  \[(J\circ_l K)_i=\begin{cases}
      \lambda_{l,k}^j(J_i),&\mbox{if }l\in J_{i'}\mbox{ with }i'>i,\\
      \lambda_{l,k}^j(J_i)\setminus\{l\}\cup (K_{0}+l-1)&\mbox{if }l\in J_{i},\\
      \lambda_{l,k}^j(J_i)\cup (K_{i-i'}+l-1),&\mbox{if }l\in J_{i'}\mbox{ with }i'<i,
    \end{cases}\]
  where $K$ is a partition of $[k]$, $J$ is a partition of $[j]$, $l\in [n]$ and for a subset $S$ of $\N$ and an integer $m\in\N$, let $S+m$ be the set $\{x+m\colon x\in S\}$.
    \end{defi}
    We give two example of composition of ordered partitions. The second example will be of key importance later in this section.
    \begin{ex} 
        Consider $J=(\{2\},\{1,3\})\in \Pi(3)$ and $K=(\emptyset,\{1,2\})\in\Pi(2)$. Let's compute $J\circ_1 K$. For this, notice that $\lambda_{1,2}^3:[3]\to[4]$ sends $1$ to $1$, $2$ to $3$ and $3$ to $4$. So, $\lambda_{1,2}^3(J_0)=\{3\}$, $\lambda_{1,2}^3(J_1)=\{1,4\}$, $K_0=K_0+1-1=\emptyset$, $\lambda_{1,2}^3(J_2)=\emptyset$, and $K_{1}+l-1=K_1=\{1,2\}$, so finally,
        \[
    J\circ_1K=(\{3\},\{4\},\{1,2\}).
        \]
    \end{ex}
    \begin{ex}\label{ex:comppart}
        Consider $J=(\emptyset,[q])\in\Pi(q)$ for a certain integer $q\ge 2$. We want to compute $(J\circ_2J)\circ_1J$. Note first that $\lambda_{2,q}^q$ sends $1$ to $1$, $2$ to $2$, and all $n\in\{3,\dots,q\}$ to $n+q-1$. So, $\lambda_{2,q}^q(J_0)=\emptyset$, $\lambda_{2,q}^q(J_1)=\{1,2,q+2,q+3,\dots,2q-1\}$, $J_0+2-1=\emptyset$, and $J_1+2-1=J_1+1=\{2,3,\dots,q+1\}$. So,
        \[
        J\circ_2J=(\emptyset,\{1,q+2,q+3,\dots,2q-1\},\{2,3,\dots,q+1\}).
        \]
        Now, note that $\lambda_{1,q}^{2q-1}:[2q-1]\to[3q-2]$ send $1$ to $1$ and all $n\in\{2,3,\dots,2q-1\}$ to $n+q-1$. So, $\lambda_{1,q}^{2q-1}((J\circ_2J)_0)=\emptyset$, $\lambda_{1,q}^{2q-1}((J\circ_2J)_1)=\{1,2q+1,2q+2,\dots,3q-2\}$, $\lambda_{1,q}^{2q-1}((J\circ_2J)_2)=\{q+1,q+2\dots,2q\}$. Since $J_0+1-1=\emptyset$ and $J_1+1-1=J_1=\{1,\dots,q)$, we conclude:
        \[ 
        (J\circ_2J)\circ_1J=(\emptyset,\{2q+1,2q+2,\dots,3q-2\},\{1,2,\dots,2q\}).
        \]
    \end{ex}
    The operad $\Pi$ can be identified to a product of operads with distributive law (see Definition \ref{def:distrlaw}). Recall from Example \ref{ex:operads} that there is an operad $\SuCom$ in sets whose algebras are unital, associative, commutative monoids. Denote by $\SD$ the unital, associative monoid whose underlying set is $\{d^i\}_{i\in\N}$ with multiplication $d^i\cdot d^j=d^{i+j}$. According to Example \ref{ex:operads}, $\SD$ can be seen as an operad in $\Set$ concentrated in arity 1. We prove the following:
    \begin{prop}\label{propisopi}
  There is an isomorphism of operads:
  \[\SuCom\circ\SD\to\Pi,\]
  where $\SuCom\circ\SD$ is endowed with the operad structure coming from the distributive law $\lambda:\SD\circ\SuCom\to\SuCom\circ\SD$ such that:
  \[
    \lambda(d^j;*_n)=(*_n,\underbrace{d^j,\dots,d^j}_n).
  \]
\end{prop}
\begin{proof}
  Verifying that $\lambda$ above satisfies the definition of a distributive law is straightforward, and left to the reader.
  
  Note that $\SuCom$ is generated as an operad by $*_2$, and $\SD$ is generated by $d^1$. This implies that $\SuCom\circ\SD$ is generated by $(*_2;d^0,d^0)$ and $(*_1;d^1)$. Let us define a morphism of operads $\varphi\colon\SuCom\circ \SD\to \Pi$, induced by 
    \begin{equation*}
      \begin{array}{lr}
      \varphi(*_2;d^0,d^0)=(\{1,2\})\in\Pi(2),& \varphi(*_1;d^1)=(\emptyset,\{1\})\in\Pi(1).
    \end{array}
    \end{equation*}
  One can express $\varphi$ on any element of the form $\t=(*_n;d^{i_1},\dots,d^{i_{n}})\in\SuCom\circ\SD(n)$: the partition $\varphi(\t)$ of $[n]$ is such that, for all $j\in[n]$,~$j\in\varphi(\t)_{i_j}$. It is easy to check that this uniquely defines $\varphi(\t)$ and that we have described a bijection between a basis of $\uCom\circ\D$ and a basis of $\Pi$ compatible with composition.
\end{proof}
From this proposition we obtain the straightforward corollary:
\begin{coro}
    A $\Pi$-algebra is a commutative monoid equipped with a commutative monoid endomorphism.
\end{coro}
\begin{rema}\label{rema:uComcircD}
    We can transfer the preceding construction to the category of $\F$-vector spaces using linearisation. Recall from Example \ref{ex:operads} that the linearisation of $\SuCom$ is the operad $\uCom$ of unital, associative, commutative $\F$-algebras. Note that the linearisation of $\SD$ is $\D$, the polynomial algebra in one indeterminate $d$. The previous result yields an isomorphism of operads between $\uCom\circ\D$, equipped with the distributive law of \ref{rema:distr.law} (with $B=D$ seen as a bialgebra using Example \ref{ex:bialgebras}), and the linearisation $\F\Pi$ of $\Pi$.
\end{rema}
    \section{Operad of \texorpdfstring{$q$}{q}-level algebras}\label{sec:Lev}
    In this section, we introduce the operad $\Lev_q$ of $q$-level algebras. This is a new construction which generalises the operad $\Lev$ of level algebras defined by Chataur and Livernet \cite{CL}. Intuitively, a $q$-level algebra is a vector space endowed with a $q$-ary operation which is strongly commutative in a certain sense, but which is not associative. This operad will be used to identify certain unstable modules over the Steenrod algebra, called the Carlsson modules, in section \ref{sec:BGC}.
    \begin{defi}[The operads $\SLev_q$, $\Lev_q$]\label{defi:Lev}
        For all integer $q>1$, \textbf{the operad of $q$-level monoids} is the sub-operad $\SLev_q\subseteq \Pi$ generated by the element $(\emptyset,[q])\in\Pi(q)$.

        For all integer $q>1$, \textbf{the operad of $q$-level algebras} is the linearisation $\Lev_q=\F\SLev_q$ of the operad of $q$-level monoids.

        As suggested in the name of this operad, we will call \textbf{$q$-level algebras} the algebras over the operad $\Lev_q$.
    \end{defi}
    Recall that operads can be described by generators and relations \cite[5.4.5]{LV}. We now present one of our main new results, which gives a presentation for the operad $\SLev_q$, and by linearisation, for the operad $\Lev_q$:
    \begin{theo}\label{theo:Lev}
        The operad $\SLev_q$ is generated by one element $\star\in\SLev_q(q)$, fixed under the action of $\∑_q$, and satisfying the unique following relation:

    \[(\star\circ_2\star)\circ_1\star=(\star\circ_2\star)\circ_1\star \cdot \sigma,\]
    where $\sigma\in\∑_{3q-2}$ is the transposition $(q\  q+1)$.
    \end{theo}
    The proof of this result, which is somewhat long and technical, is postponed to Section \ref{sec:proofLev}.

    Let's illustrate the notion of $q$-level algebra through examples:
    \begin{ex}
        A $3$-level algebra is a vector space $A$ equipped with a ternary operation $\star$ such that, for all $(a_i)_{i\in[7]}$,
        \[
        \star(\star(a_1,a_2,a_3),\star(a_4,a_5,a_6),a_7)=\star(\star(a_1,a_2,a_4),\star(a_3,a_5,a_6),a_7).
        \]
        We can represent any composition of $\star$ by corollas with $3$ inputs. For example, the above equality can be represented by:
        \[
    \begin{tikzpicture}[grow'=up,level distance=25pt]
      \Tree 
      [.$\star$  [.$\star$ $a_1$  $a_2$ $a_3$ ] [.$\star$  $a_4$ $a_5$ $a_6$ ] $a_7$ ]
    \end{tikzpicture}=\begin{tikzpicture}[grow'=up,level distance=25pt]
      \Tree 
      [.$\star$  [.$\star$ $a_1$  $a_2$ $a_4$ ] [.$\star$  $a_3$ $a_5$ $a_6$ ] $a_7$ ]
    \end{tikzpicture}.
  \]
  Using the fact that $\SLev_q$ is a suboperad of $\Pi$, one can see that the element $a_i$ will be at height $j$ in this composition tree if and only if the index $i$ was in the $j$-th component of the partition given by $(\star\circ_2\star)\circ_1\star\in\Pi(7)$. This is true for all compositions of $\star$ seen as composition trees. Using the relations within $\Pi$, this means that two elements in a product can be interchanged whenever they are at the same height in the composition tree. For example, the product $\star(\star(\star(a_1,a_3,a_5),a_6,a_8),a_2,\star(a_4,a_9,a_7))$ has composition tree:
  \[
\begin{tikzpicture}[grow'=up,level distance=25pt]
      \Tree 
      [.$\star$  [.$\star$ [.$\star$ $a_1$  $a_3$ $a_5$ ] $a_6$ $a_8$ ] $a_2$ [.$\star$  $a_4$ $a_9$ $a_7$ ] ]
    \end{tikzpicture},
  \]
  and this means that in the $3$-level algebra $A$, the resulting product does not change if applying any permutation of $a_1,a_3,a_5$, or of $a_6,a_8,a_4,a_9,a_7$ (but $a_2$ cannot, \textit{a priori}, be transposed with any other element in the product). 
    \end{ex}
\begin{rema}
    The operad $\Lev_2$ is the operad $\Lev$ of level algebras of \cite{CL}.
\end{rema}
    We now construct a cofiltration of $\Lev_q$ using a notion of truncation. This new family of operads will be used in Section \ref{sec:BGC} to identify certain unstable modules.

    For all $s>0$, $n\in\N$, denote by $\SLev_q^{>s}(n)$ the subset of $\SLev_q(n)$ of partitions $J$ such that there exists $i>s$ such that $J_i\neq\emptyset$. It is easy to check that $\SLev_q^{>s}\subset\SLev_q$ forms an operadic ideal (see \cite[5.2.16]{LV}), which allows us to define an operad structure on the quotient $\SLev_q/\SLev_q^{>s}$.
    \begin{defi}\label{def:TsLev}
        The \textbf{$s$-truncation} of $\SLev$ is the quotient operad $T_s\SLev_q=\SLev_q/\SLev_q^{>s}$.

        The \textbf{$s$-truncation} of $\Lev$ is the quotient operad $T_s\Lev_q=\Lev_q/\Lev_q^{>s}$, where $\Lev_q^{>s}=\F\SLev_q^{>s}$.
    \end{defi}
    \begin{rema}\label{rema:cof}
         Since we have a filtration $\SLev_q^{>1}\supset\SLev_q^{>2}\supset \cdots$ whose limit is $\emptyset$, we obtain a cofiltration of $\Lev_q$:
    \begin{equation}\tag{$*$}\label{diag:cof}
        \diag{\Lev_q\ar[r]&\dots\ar[r]&T_{s+1}\Lev_q\ar[r]&T_{s}\Lev_q\ar[r]&\dots}
    \end{equation}
    \end{rema}
    \section{Central operations}\label{sec:centop}
    In this section, we introduce the notion of \textit{central operation} in an operad. These operations satisfy a certain interchange relation with respect to all other operations in the operad. We give some motivation for this definition: more precisely, the condition of centrality for a $q$-ary operation $\star$ in an operad $\P$ is the minimal condition required to define an algebra morphism $x\mapsto \star(x,\dots,x)$ in any algebra over $\P$. This notion of $\P$-centrality generalises the notion of $\P$-centrality for binary commutative operations defined in \cite[Definition 5.2]{SI2}.

    Given a $\P$-central operation $\star$, we construct a functor $\psi_!$ which takes a vector space $V$ with a linear self-map $d$ and freely produces a $\P$-algebra over $V$ satisfying $dv=\star(v,\dots,v)$. This construction allows us to produce free unstable algebras over an unstable module in Section \ref{sec:unstalg}. The results of the present section, in particular the commutative diagram \ref{diag:freecomp} of \ref{coro:freecomp}, are crucial to the proof of Theorem \ref{theo:freeunst} on the structure of free unstable algebras.
    
    Recall that $\F$ is a field of characteristic $p$ and order $q=p^\alpha$, and that $D=\F[d]$ is the polynomial algebra in one indeterminate $d$ (see Example \ref{ex:bialgebras}).

    Fix an operad $\P$ and an arity $q$ operation $\star\in\P(q)$.
    \begin{nota}
       Let $\star_n\in\P(q^n)$ denote the operation inductively defined by:
        \[
        \star_0=1_\P\quad\mbox{and}\quad \star_{n+1}=\star(\underbrace{\star_n,\dots,\star_n}_{q}).
        \]
    \end{nota}
    In particular, $\star_1=\star$. The associativity of the composition in $\P$ implies that:
    \[\star_i(\underbrace{\star_j,\dots,\star_j}_{q^i})=\star_j(\underbrace{\star_i,\dots,\star_i}_{q^j})=\star_{i+j}.\]

    For all vector spaces $V$, note that, since the base field is of order $q$, the following induces a linear map:
    \begin{eqnarray*}
        \psi_V:&D\otimes V&\to S(\P,V)\\
        &d^n\otimes v&\mapsto v^{\star_nq^n},
    \end{eqnarray*}
    where the notation $a^{\mu n}$ was introduced in Definition \ref{defalgop}. This map is clearly natural in $V$, and induces a monad morphism $\psi:D\otimes-\to S(\P,-)$. This induces a functor $\psi^*:\P_{\alg}\to \D_{\mod}$ on the categories of algebras which restricts the structure along $\psi$ (see \cite[Section 3.6]{BW}). This functor admits a left adjoint that can be constructed for example as a left Kan extension (see \cite[Chapter X]{SML}).
    \begin{nota}\label{nota:psi!}
        Denote by $\psi_!:\D_{\mod}\to \P_{\alg}$ a left adjoint to the functor $\psi^*$.
    \end{nota}
    Explicitely, for a $\D$-module $M$, the $\P$-algebra $\psi_!(M)$ is obtained as a coequaliser between two $\P$-algebra morphism $S(\P,\D\otimes M)\to S(\P,M)$. The first of these $\P$-algebra morphism is given by the $\D$-action $\D\otimes M\to M$. The second is given by the composition:
    \[ 
        \diag{S(\P,\D\otimes M)\ar[rr]^-{S(\P,\psi_M)}&&S(\P,S(\P,M))\ar[r]^-{(\gamma_\P)_M}&S(\P,M)},
    \]
    where the first map is $\psi$ applied to the underlying vector space of $M$, and the second map is the composition of the monad $S(\P,-)$. In other words, $\psi_!(M)$ is the quotient of the free $\P$-algebra $S(\P,M)$ by the $\P$-ideal generated by the elements $d^nm-m^{\star_n q^n}$ for $m\in M$ and $n\in\N$ (see \cite[2.3.1]{GK} for the notion of ideal in a $\P$-algebra). 
    
    For a $\D$-module $M$, the resulting $\P$-algebra $\psi_!(M)$ can be equipped again with a $\D$-module structure through $\psi^*$, that is, by declaring that, for all $\mu\in\P(n)$, $m_1,\dots,m_n\in M$,
    \[d(\mu;m_1,\dots,m_n)=\star(\underbrace{(\mu;m_1,\dots,m_n),\dots,(\mu;m_1,\dots,m_n)}_q).\]
    So, $\psi_!(M)$ is a $\P$-algebra and a $\D$-module, but it is not necessarily a $\P$-algebra in $\D$-modules as defined in \ref{def:PalginBmod}. To extend $\psi_!$ into a functor $\D_{\mod}\to{\P}_{\alg}^{\D}$, we need $\star$ to satisfy an additional condition, which we call $\P$-centrality:
    \begin{defi}[$\P$-central operations]\label{def:centop}
        An operation $\star\in\P(q)$ is said to be \textbf{$\P$-central} if, for all $\mu\in\P(n)$,
        \begin{equation}
    \star(\mu,\dots,\mu)=\mu(\star,\dots,\star)\cdot \sigma_{q,n},\tag{I}\label{interchange}
  \end{equation}
  where $\sigma_{q,n}\in\∑_{qn}$ sends $(i-1)n+k$ to $(k-1)q+i$ for all $i\in[q]$,~$k\in[n]$. Under the usual identifications $[qn]\cong [q]\times[n]$ and $[qn]\cong [n]\times [q]$ the permutation $\sigma_{q,n}$ corresponds to the transposition $[q]\times[n]\to [n]\times [q]$, $(i,k)\mapsto(k,i)$.
    \end{defi}
    We get the straightforward result:
    \begin{lemm}\label{lemm:psiext}
        Let $\star\in\P(q)$. The functor $\psi_!:\D_{\mod}\to\P_{\alg}$ defined in \ref{nota:psi!} can be extended into a functor $\psi_!:\D_{\mod}\to\P_{\alg}^{\D}$ if and only if $\star$ is $\P$-central. In other words, when $\star$ is $\P$-central, there is a commutative diagram of categories:
        \[
        \diag{&\P_{\alg}\\\D_{\mod}\ar[ur]^{\psi_!}\ar[r]^{\psi_!}&\P_{\alg}^{\D}\ar[u]_-{U}},
        \]
        were the $U$ is the forgetful functor that extracts the underlying $\P$-algebra.
    \end{lemm}
    \begin{proof}
        For any $\D$-module $M$, consider the action of $D$ on $\psi_!(M)$ obtained through $\psi^*$ (this action is described above). The condition for the action of $d$ on $\psi_!(M)$ to induce a $\P$-algebra morphism is precisely the condition of $\P$-centrality for $\star$.
    \end{proof}
    Before illustrating the notion of $\P$-centrality with some examples, we now prove the main result of this section, which will allow us to prove Theorem \ref{theo:freeunst}. 
\begin{prop}\label{prop:freecomp}
    Let $\star\in \P(q)$. There is a natural isomorphism:
    \[
    \psi_!\circ(D\otimes -)\cong S(\P,-).
    \]
\end{prop}
\begin{proof}
    Consider the following diagram of categories and adjunctions:
    \[
    \diag{\Vect\ar@<0.5ex>[rr]^{S(\P,-)}\ar@<0.5ex>[dd]^{D\otimes -}&&\P_{\alg}\ar@<0.5ex>[ll]^{\Forget}\ar@<0.5ex>[lldd]^{\psi^*}\\\\
    \D_{\mod}\ar@<0.5ex>[uu]^{\Forget}\ar@<0.5ex>[uurr]^{\psi_!}},
    \]
    where $\Forget$ denotes all the right adjoints that extract underlying vector spaces. One can easily check that the diagram formed by right adjoints commute. Since $\psi_!\circ(\D\otimes-)$ is a left adjoint of $\Forget\circ \psi^*$, it is also a left adjoint of $\Forget:\P_{\alg}\to \Vect$. Left adjoints being unique up to natural isomorphisms, we obtain the desired natural isomorphism $\psi_!\circ(D\otimes -)\cong S(\P,-)$.
\end{proof}
Combining Lemma \ref{lemm:psiext} and Proposition \ref{prop:freecomp}, one gets the following:
\begin{coro}\label{coro:freecomp}
    Let $\star\in \P(q)$ be a $\P$-central operation. The following diagram of categories commutes:
    \begin{equation}\tag{D1}\label{diag:freecomp}
         \diag{\Vect\ar[d]_{\D\otimes -}\ar[r]^{S(\P,-)}&\P_{\alg}\\\D_{\mod}\ar[r]^{\psi_!}&\P_{\alg}^{\D}\ar[u]_-{U}},
    \end{equation}
\end{coro}

The next proposition allows one to check, from a presentation of an operad $\P$, if an operation $\star\in\P(q)$ is $\P$-central. It is used in the example~\ref{interchangeop} to produce objects that will appear in Sections \ref{sec:BGC}.
\begin{prop}\label{interchangegen}
   Let~$\star\in\P(q)$. Let $F$ be a sub-symmetric sequence of~$\P$. Suppose that $F$ generates the operad~$\P$. The operation $\star$ is $\P$-central if and only if it satisfies relation \eqref{interchange} of Definition \ref{def:centop} for all~$\mu\in F$.
\end{prop}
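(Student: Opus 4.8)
The plan is to argue that the set of operations $\mu\in\P$ satisfying the interchange relation $(\ref{interchange})$ is closed under operadic composition, so that if it contains the generating set $F$ (together with the unit), it must be all of $\P$. The forward implication is trivial: if $\star$ is $\P$-central, it satisfies $(\ref{interchange})$ for every $\mu\in\P(n)$, in particular for $\mu\in F$. For the converse, I would first reformulate $\P$-centrality in operadic terms. Write $\bar\star=(\star;\1,\dots,\1)$-type notation — more precisely, observe that the left-hand side of $(\ref{interchange})$ is the composite $\star(\mu,\dots,\mu)=\gamma_\P(\star;\mu,\dots,\mu)$ and the right-hand side is $\gamma_\P(\mu;\star,\dots,\star)$ acted on by $\sigma_{p,n}$. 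I would introduce the notation $\mu^{[p]}$ for $\star(\mu,\dots,\mu)=\gamma_\P(\star;\mu,\dots,\mu)\in\P(pn)$ when $\mu\in\P(n)$, and record that $(\ref{interchange})$ reads $\mu^{[p]}=\mu(\star,\dots,\star)\cdot\sigma_{p,n}$.

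Next I would set $\mathcal C=\{\mu\in\P \mid \mu \text{ satisfies }(\ref{interchange})\}$, viewed as a sub-$\∑$-module of $\P$, and check:
\begin{itemize}
\item $\mathcal C$ is a sub-symmetric sequence, i.e.\ closed under the $\∑_n$-action in each arity; this follows because $(\ref{interchange})$ is stable under precomposition of $\mu$ by $\tau\in\∑_n$, the point being that $\sigma_{p,n}$ is defined compatibly for all $n$ and that $\star$ is $\∑_p$-invariant, so the ambient permutations match up (this is exactly the kind of block-permutation bookkeeping that makes $\sigma_{p,n}$ natural in $n$).
\item $\1=1_\P\in\mathcal C$: the relation $(\ref{interchange})$ for $\mu=1_\P\in\P(1)$ says $\star=\star\cdot\sigma_{p,1}$, and $\sigma_{p,1}$ is the identity of $\∑_p$, so this holds.
\item $\mathcal C$ is closed under partial composition: if $\mu\in\mathcal C\cap\P(m)$ and $\nu\in\mathcal C\cap\P(n)$, then $\mu\circ_i\nu\in\mathcal C$.
\end{itemize}
Since $F$ generates $\P$, every element of $\P$ is obtained from $F\cup\{1_\P\}$ by iterated partial compositions and $\∑$-actions, hence lies in $\mathcal C$; this gives $\mathcal C=\P$ and therefore $\star$ is $\P$-central.

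The main obstacle is the closure under partial composition, which amounts to a permutation-chasing identity. Given $\mu\in\P(m)$ and $\nu\in\P(n)$ both satisfying $(\ref{interchange})$, I want $(\mu\circ_i\nu)^{[p]}=(\mu\circ_i\nu)(\star,\dots,\star)\cdot\sigma_{p,m+n-1}$. The strategy is to expand $(\mu\circ_i\nu)^{[p]}=\star(\mu\circ_i\nu,\dots,\mu\circ_i\nu)$ using associativity of $\gamma_\P$ to move the $p$ copies of $\star$ past $\mu$ (using $(\ref{interchange})$ for $\mu$), obtaining a composite of the form $\mu\big(\star,\dots,\star,\nu^{[p]},\star,\dots,\star\big)$ up to a permutation built from $\sigma_{p,m}$, then apply $(\ref{interchange})$ for $\nu$ to rewrite $\nu^{[p]}=\nu(\star,\dots,\star)\cdot\sigma_{p,n}$, and finally reassemble into $(\mu\circ_i\nu)(\star,\dots,\star)$ composed with some permutation; the bulk of the work is verifying that this permutation equals $\sigma_{p,m+n-1}$. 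I expect this to reduce, via the identification $[pk]\cong[p]\times[k]$ under which $\sigma_{p,k}$ is the coordinate transposition $\tau(i,j)=(j,i)$ (as noted in the Remark following Definition~\ref{defLEV}), to the naturality of that transposition with respect to the inclusion $[p]\times[m]\hookleftarrow$ blocks corresponding to substituting $[p]\times[n]$ in the $i$-th slot — a statement about $\tau$ commuting with block substitutions that is geometrically obvious but requires care to write down with the index conventions used here. Equivariance of $\mu$ and $\nu$ under their symmetric groups is what allows the intermediate permutations to be absorbed, so the $\∑$-module structure of $\mathcal C$ and the closure under composition are proved together.
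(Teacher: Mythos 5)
Your overall strategy is the same as the paper's: the forward direction is immediate, and for the converse both you and the paper consider the collection of operations $\mu$ satisfying (\ref{interchange}) and show it is closed under partial composition by exactly the computation you sketch --- expand $(\mu\circ_i\nu)^{\star p}$ as an iterated partial composition of $\mu^{\star p}$ with copies of $\nu$, apply (\ref{interchange}) for $\mu$, move the copies of $\nu$ past the resulting block permutation, apply (\ref{interchange}) for $\nu$, and identify the residual permutation with $\sigma_{p,m+n-1}$. The paper records this as an explicit chain of equalities with an intermediate block permutation $\sigma'$, so that part of your plan is sound, although you leave the permutation bookkeeping as an expectation rather than carrying it out.

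However, your concluding step has a genuine gap. You assert that, since $F$ generates $\P$, ``every element of $\P$ is obtained from $F\cup\{1_\P\}$ by iterated partial compositions and $\Sigma$-actions''. That is not what generation by a sub-symmetric sequence gives: a general element of $\P(n)$ is a \emph{linear combination} of such iterated composites, and your set $\mathcal C$ is not obviously a linear subspace, because relation (\ref{interchange}) is not linear in $\mu$ --- its left-hand side $\star(\mu,\dots,\mu)$ is the diagonal of a $p$-multilinear expression. You call $\mathcal C$ a sub-$\Sigma$-module but only verify stability under the symmetric group actions, never closure under sums (nor under scalars, which over $\F_p$ uses $c^p=c$). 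Closure under sums is not free: expanding $\star((\mu+\nu)^{\times p})$ produces mixed terms $\star(\dots,\mu,\dots,\nu,\dots)$ that are related to one another only by the right action of block permutations, so they do not cancel automatically in characteristic $p$. This is precisely why the paper's proof states, alongside closure under partial composition, the separate claim that if $\mu$ and $\nu$ have the same arity then $\mu+\nu$ satisfies (\ref{interchange}); your write-up omits this point entirely, and as it stands it only shows that monomial composites of elements of $F$ satisfy (\ref{interchange}), which is weaker than $\P$-centrality as defined in Definition \ref{definterchange}.
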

\begin{proof}
  This is a fairly straightforward generalisation of \cite[Proposition 5.7]{SI2}.
\end{proof}
Let us now give some examples of operads endowed with central operations.
\begin{ex}\label{interchangeop}
  The generator of the vector space $\uCom(q)$ is $\uCom$-central. The generator $\star\in\Lev_q(q)$ of the operad $\Lev_q$ defined in Definition \ref{defi:Lev} is $\Lev_q$-central. 

  Denote by $T_s\Lev_q$ the image of the composite $\Lev_q\hookrightarrow \Pi\cong \uCom\circ\D\to\uCom\circ T_s\D$. The operadic generator $\star\in\Lev_q(q)$ yields a $T_s\Lev_q$-central operation (one can show this using Proposition~\ref{interchangegen}). Similarly, the operation $(X_q;d^{\times q})$, where $X_q$ is the generator of the arity module $\uCom(q)$, is a $\uCom\circ\D$-central operation, a $\uCom\circ\Dp$-central operation, and a $\uCom\circ Q_s\D$-central operation. More generally, if $\star\in\P(q)$ is $\P$-central, then $(d^i)^{\star q}$ is $\P\circ\D$, $\P\circ\Dp$, and $\P\circ Q_s\D$-central, for all~$i\in\N$.
\end{ex}
\section{Proof of Theorem \ref{theo:Lev}}\label{sec:proofLev}
This section is devoted to the proof of Theorem \ref{theo:Lev}, which gives a presentation of the operad $\Lev_q$ of Definition \ref{defi:Lev}, and gives us insights on its associated algebras. This proof uses the description of free operads using trees (see, for example, \cite{LV}, section 5.4). Before proving this theorem, we need one preliminary result:
\begin{lemm}\label{lemm:partLev}
  As a subset of $\Pi(n)$, $\SLev_q(n)$ is the set of all ordered partitions $J=(J_i)_{0\le i\le s}$ of $[n]$ such that:
  \begin{equation}\tag{$**$}\label{cond}
     \sum_{i=0}^s\frac{|J_i|}{q^i}=1.
  \end{equation}
\end{lemm}
\begin{proof}
  By induction on $n$. Since $\SLev_q(n)$ is generated by a single element $(\emptyset,[p])$ of arity $q$, $\SLev_q(0)=\emptyset$, $\SLev_q(1)$ is equal to $\{1_\Pi\}=\{([1])\}$, $\SLev_q(q)$ is equal to $\{(\emptyset,[p])\}$, and $\SLev_q(i)=\emptyset$ for all $1<i<q$. It is easy to see that $([1])$ is the only partition of $[1]$ satisfying equation \eqref{cond}, that $(\emptyset,[p])$ is the only partition of $[p]$ satisfying equation \eqref{cond}, and that no partition of $\emptyset=[0]$, and no partition of $[2],[3],\dots,[q-1]$ can satisfy equation \eqref{cond}.

  Suppose now that $n>q$ and that we have proven that $\SLev_q(n-q+1)$ is the set of all ordered partitions $J=(J_i)_{0\le i\le s}$ of $[n-q+1]$ satisfying equation \eqref{cond}.

  On one hand, let $J=(J_i)_{0\le i\le s}$ a partition of $[n]$, and suppose that $J\in \SLev_q(n)$. Then, since $n>q$ and $\SLev_q$ is spanned by $(\emptyset,[q])$, there exist a partition $\hat J=(\hat J_i)_{0\le i\le s}$ of $[n-q+1]$, an integer $k\in[n-q+1]$, and a permutation $\sigma\in\∑_n$ such that $J=\hat J\circ_k(\emptyset,[q])\cdot \sigma$. By the induction hypothesis, $\hat J$ satisfies equation \eqref{cond}. Denote by $i_0$ the unique integer such that $k\in \hat J_{i_0}$. Then,
  \[
    \sum_{i=0}^s\frac{|J_i|}{q^i}-\sum_{i=0}^s\frac{|\hat J_i|}{q^i}=\frac{1}{q^{i_0}}-\frac{q}{q^{i_0+1}}=0,
  \]
  so $J$ satisfies equation \eqref{cond}.

  On the other hand, let $J=(J_i)_{0\le i\le s}$ be a partition of $[n]$, and suppose that $J$ satisfies equation \eqref{cond}. One can assume that $J_s$ is non-empty without loss of generality. Then, since $\sum_{i=0}^s\frac{|J_i|}{q^i}=1$ and $n>q$, this implies that $s>1$, and
  \[
    |J_s|=q^s\left(1-\sum_{i=0}^{s-1}\frac{|J_i|}{q^i}\right)=q\left(q^{s-1}-\sum_{i=0}^{s-1}|J_i|q^{s-1-i}\right)
  \]
  So $|J_s|$ is a multiple of $q$. In particular, since $J_s\neq\emptyset$, $|J_s|\ge q$. There exists a permutation $\sigma\in\∑_n$ such that $\{n-q+1,\dots,n\}\in(J\cdot\sigma)_s$. Let $\bar J\in\Pi(n-q+1)$ be the partition of $[n-q+1]$ such that
  $\bar J_i=(J\cdot\sigma)_i$ for all $i<s-1$, $\bar J_{s-1}=\{n-q+1\}\cup (J\cdot\sigma)_{s-1}$, and $\bar J_s=\emptyset$. Then, one has:
  \[
    \sum_{i=0}^s\frac{|\bar J_i|}{q^i}=\left(\sum_{i=0}^s\frac{|J_i|}{q^i}\right)-\frac{q}{q^s}+\frac{1}{q^{s-1}}=1,
  \]
  so, by the induction hypothesis, $\bar J\in\SLev_q(n-q+1)$, and, since $J=(\bar J\circ_{n-q+1}(\emptyset,[q]))\cdot \sigma^{-1}$, we conclude that $J\in\SLev_q(n)$.
\end{proof}

\begin{proof}[Proof of Theorem \ref{theo:Lev}]
  By definition, as a suboperad of $\Pi$, $\SLev_q$ is generated by an operation $\star$ of arity $p$ fixed by the action of $\∑_q$. As a partition, recall from Example \ref{ex:comppart} that $(\star\circ_2\star)\circ_1\star=(\emptyset,\{2q+1,\dots,3q-2\},\{1,\dots,2q\})$, so $\star$ satisfies the relation
  \begin{equation}
    (\star\circ_2\star)\circ_1\star=(\star\circ_2\star)\circ_1\star \cdot (q\ q+1).\tag{$***$}\label{*}
  \end{equation}
  Let $\SMagCom_q$ be the free operad generated by an arity $q$ operation fixed by the action of $\∑_q$. Let $\varphi:\SMagCom_q\to\SLev_q$ be the unique operad morphism sending $\mu$ to $\star$, and $\sim$ be the equivalence relation in $\SMagCom_q$ given by $t\sim t'$ if and only if $\varphi(t)=\varphi(t')$ To show that all the relations satisfied by $\star$ are generated by the relation \eqref{*}, we need to show that, in $\SMagCom_q$, $\sim$ is implied by (or contained in) the relation $\approx$ generated by $(\mu\circ_2\mu)\circ_1\mu \approx (\mu\circ_2\mu)\circ_1\mu \cdot (q\ q+1)$ under operadic composition and action of the symmetric groups.

  We will use the description of the free operad using the tree module from, for example, \cite{LV}, section 5.4. Note that this reference concerns operads in vector spaces, but can be readily seen as a statement on the linearisation of set operads. Let $M$ be the symmetric sequence concentrated in arity $q$ with $M(q)$ being the trivial representation. Denote by $\mathcal T_0M=I$ the unit operad, $\mathcal T_1M=I\coprod M$, and by induction, for all $n>0$, $\mathcal T_nM=I\coprod(M\circ\mathcal T_{n-1}M)$. Denote by $\mathcal T M=\bigcup_n\mathcal T_nM$. According to \cite{LV}, Theorem 5.4.2, there is an operad structure on $\mathcal TM$, and with this structure, $\mathcal T M$ is isomorphic as an operad to $\SMagCom_q$. Moreover, since $M$ is concentrated in arity $q$, it is easy to prove by induction that for all $n$, $\mathcal T_n M$ is concentrated in arities $1+k(q-1)$ for $0\le k\le n$. So $\mathcal TM$ is concentrated in arities $1+k(q-1)$ for $k\in\N$.

  Still following \cite{LV}, elements of $\mathcal T_nM$ are (non-planar) labelled rooted trees of at most $n$ levels with two types of vertices: the leaves, and the inner vertices, which are indexed by $\mu$ and have $q$ ingoing edges. The element $\mu\in\SMagCom_q(q)$ is identified with the $q$-corolla, that is, the tree with one single inner vertex and $q$ leaves. Elements of $\mathcal T M(n)$ are such trees with $n$ leaves labelled 1 to $n$. If $t\in\mathcal TM(n)$ is such a tree, then $\varphi(t)\in\SLev_q(n)\subset\Pi(n)$ is a partition $\varphi(t)=(\varphi(t)_i)_{0\le i\le s}$ of $[n]$ where $\varphi(t)_i$ is the set of labels of leaves of $t$ at height $i$. Since $\Pi(n)$ is the set of ordered partitions of $[n]$, this implies that two trees $t,t'$ have same image under $\varphi$ if and only if for all $i$, $t$ and $t'$ have the same set of labels for their leaves of height $i$.
  
  Let $(t,t')$ be such a pair of trees, with $n$ leaves. Denote by $s$ the height of $t$, and for all $i\in\{0,\dots,s\}$ let $\alpha_i=|\varphi(t)_i|$. There exists a $\sigma\in\∑_n$ such that 
\[
  \varphi(\sigma t)_s=\{1,\dots,\alpha_s\},\quad \varphi(\sigma t)_{s-1}=\{\alpha_s+1,\dots,\alpha_s+\alpha_{s-1}\}, \dots,
\]
\[
  \varphi(\sigma t)_i=\{\alpha_s+\alpha_{s-1}+\dots+\alpha_{i+1}+1,\dots,\alpha_s+\dots+\alpha_i\}.
\]
  This also implies that $\varphi(\sigma t)_i=\{\alpha_s+\dots+\alpha_{i+1}+1,\dots,\alpha_s+\dots+\alpha_i\}$. So $\sigma t$ and $\sigma t'$ are trees such that the leaves of height $i$ are labelled $\{\alpha_s+\dots+\alpha_{i+1}+1,\dots,\alpha_s+\dots+\alpha_i\}$. There exists a permutation $\rho\in \∑_{\alpha_s}\times\dots\times \∑_{\alpha_0}$ such that $\sigma t'=\rho\sigma t$. 
  
  The fact that $\varphi$ is compatible with actions of the symmetric groups then implies that the equivalence relation $\sim$ is generated $t\sim\rho t$ where the leaves of $t$ of height $i$ are labelled $\{\alpha_s+\dots+\alpha_{i+1}+1,\dots,\alpha_s+\dots+\alpha_i\}$, and $\rho\in\∑_{\alpha_s}\times\dots\times\∑_{\alpha_0}$. Since all inner vertices have $q$ incoming edges, $\alpha_s=lq$ for an integer $l>0$. Without loss of generality, we can assume that we have chosen $\sigma$ such that the leaves of $t$ labelled $\{jq+1,\dots,(j+1)q\}$ are the leaves of the same internal vertex of height $s-1$ for all $j\in[l]$.

  We will now prove, by recurrence on the height $s$, that $t\approx\rho t$. Denote by $\rho=\rho_s\times\rho_{s-1}\times\dots\times\rho_0$ where $\rho_i\in\∑_{\alpha_i}$.

  Let $\bar t$ be the tree obtained from $t$ by removing the ingoing edges of all the inner vertices of height $s-1$ (there are $l$ of them), relabelling these vertices with $\{1,\dots,l\}$, and relabelling the leaf in $t$ labelled $i$ by $i-\alpha_s+l$ for all $i>\alpha_s$. We have described an element $\bar t\in \mathcal TM(n-\alpha_s+l)$ of height $s-1$, such that $t=\bar t(\mu^{\times l},1_{\SMagCom_q}^{\times n-\alpha_s})$ (this is $\bar t$ composed with $\mu$ in the $l$ first inputs). Denote by $\hat t=(\id_{[l]}\times\rho_{s-1}\times\dots\times\rho_{0})\bar t$. by the induction hypothesis, we know that $\bar t \approx \hat t$, so, composing by $\mu$ in the $l$ first input, $t\approx(\id_{[\alpha_s]}\times\rho_{s-1}\times\dots\times\rho_{0})t$.

  It remains to prove that $\rho t\approx (\id_{[\alpha_s]}\times\rho_{s-1}\times\dots\times\rho_{0})t$, or equivalently, that $\rho_{s} t\approx t$, where we identified $\rho_s$ with $\rho_s\times\id_{[\alpha_{s-1}]}\times\dots\times\id_{\alpha_{0}}$. Since $\∑_{\alpha_s}$ is generated by transpositions, we can suppose that $\rho_s$ is a transposition $(i_1\ i_2)$.

  If $i_1,i_2\in \{jq+1,\dots,(j+1)q\}$ for a certain $j\in[l]$, then since $\mu$ is fixed by $\∑_q$, $\rho_s t=t$. Suppose now that there exists $j_1,j_2\in[l]$ with $j_1\neq j_2$, such that $j_1q+1\le i_1\le(j_1+1)q$ and $j_2q+1\le i_2\le(j_2+1)q$. Since $\mu$ is fixed by $\∑_q$, we can assume that $i_1=(j_1+1)q$ and $i_2=j_2q+1$

  Since $\bar t$ is a tree of height $s-1$ then $l$ is again a multiple of $q$, $l=ql'$, and since we have assumed that $l>1$, $l'>0$. 

  by the induction hypothesis, $(j_2\ 2)(j_1\ 1)\bar t\approx \bar t$. We can again suppose, without loss of generality, that we have chosen $\sigma$ such that the leaves of $(j_2\ 2)(j_1\ 1)\bar t$ labelled $\{jq+1,\dots,(j+1)q\}$ are the leaves of the same internal vertex of height $s-2$ for all $j\in[l']$. Composing with $\mu$ in the $l$ first inputs then yields $\tau  t\approx t$, where $\tau$ is a block permutation, where the blocks have size $q$, and $\tau$ transpose the $j_1$-th block with the first block and the $j_2$-th block with the second block. Then, $\tau\rho\tau^{-1}$ is the transposition $(q\ q+1)$. I pretend that $(q\ q+1) \tau t\approx\tau t$.

  Let $\bar s$ be the tree obtained from $(j_2\ 2)(j_1\ 1)\bar t$ by collapsing the leaves labelled $1$ to $q$, relabelling the new leaf of level $s-2$ by $1$, and relabelling all other leaves labelled $i$ in $(j_2\ 2)(j_1\ 1)\bar t$ by $i-q+1$. Then $\bar s$ is a new element in $\mathcal TM(n-\alpha_s+l)$ such that $(j_2\ 2)(j_1\ 1)\bar t=\bar s\circ_1\mu$. Denote by $s=\bar s(1_{\SMagCom_q},\mu^{\times l-1},1_{\SMagCom_q}^{\times n-\alpha_s})$ (that is $\bar s$ composed by $\mu$ in the inputs $2,3\dots,l$). We then have:
  \[
    \tau t=s\circ_1\left(\dots\left((((\mu\circ_2 \mu)\circ_1 \mu)\circ_{2q+l-2}\mu)\circ_{2q+l-3}\mu\right)\dots \circ_{2q+1}\mu\right),
  \]
  and
  \[
    (q\ q+1) \tau t=s\circ_1\left(\dots\left(((((\mu\circ_2 \mu)\circ_1 \mu)(q\ q+1))\circ_{2q+l-2}\mu)\circ_{2q+l-3}\mu\right)\dots \circ_{2q+1}\mu\right).
  \]
  Since $\approx$ is generated by $(\mu\circ_2 \mu)\circ_1 \mu)\approx((\mu\circ_2 \mu)\circ_1 \mu)(q\ q+1)$, we have proven that $(q\ q+1) \tau t\approx\tau t$, so, $\tau^{-1}((q\ q+1) \tau t\approx\tau t)$, that is, $\rho_st\approx t$.
\end{proof}

\part{Unstable modules, unstable algebras}
We now turn to unstable modules and algebras over the Steenrod algebra. The main result of this part of the article is Theorem \ref{theo:freeunst}, which characterise certain free unstable algebras over operads.

\section{The Steenrod algebra, unstable modules}\label{sec:Steenrodalg}
    The Steenrod algebra is a central object in homotopy theory and algebraic topology. It was built to represent the natural operations in cohomology that are stable under the suspension operation. As such, the cohomology of any topological space can be seen as a module over the Steenrod algebra. Unstable modules are modules over the Steenrod algebra satisfying an additional property which models the behaviour of these cohomology modules.
    
    In this section, we describe the Steenrod algebra $\A(q)$ of reduced $q$-th powers (without the Bockstein operator). This algebra, which is a sub-bialgebra of the classical Steenrod algebra, can be presented in various equivalent ways, including the presentation given by \cite{JM}, and more recently \cite{K1}. For the sake of clarity of our argument, we will give our own presentation, which is again equivalent. The statements for $\A(q)$ and its unstable modules are well known to easily lead to results on the Steenrod algebra and its unstable modules (see for example \cite[appendix A.1]{LZ}).
    
    We recall that, throughout this paper, $\F$ is a field of characteristic $p$ and order $q=p^\alpha$. For the definitions of the category of unstable modules over this algebra, we will rely on the very detailed textbook \cite{LS}. The definitions in \cite{LS} concern unstable modules over the Steenrod algebra, but extend readily to the unstable modules over the sub-bialgebra of reduced powers.
    \begin{defi}[The Steenrod algebra of reduced powers]
    The \textbf{Steenrod algebra of $q$-th reduced powers} (without Bockstein operator, with grading divided by 2 if $p>2$), is the bialgebra $\A(q)$ generated, as a unital associative algebra, by elements $P^i$ for all $i>0$ of degree $(q-1)i$, satisfying relations called \textbf{Adem relations}. The coproduct in $\A(q)$ is given by:
    \[
        \Delta(P^i)=\sum_{j+k=i}P^j\otimes P^k,
    \]
    where $P^0$ is understood to be the unit of $\A(q)$.
    \end{defi}
    \begin{defi}[Unstable modules over the Steenrod algebra, reduced modules, connected modules, the suspension]\label{def:unstredconnsus}
       A graded $\A(q)$-module $M=(M^i)_{i\in\N}$ is \textbf{unstable} if for all $x\in M^i$, $P^jx=0$ whenever $j>i$. Let $\U(q)$ denote the full subcategory of $\A(q)_{\mod}$ with objects the unstable $\A(q)$-modules.

       For any $\A(q)$-module $M$, and any element $x\in M^i$, let $P_0x=P^ix$. This induces an endomorphism $P_0:M\to M$ which doubles the degree.

       An unstable module $M$ is said to be \textbf{reduced} if $P_0$ is injective.

       An unstable module $M$ is said to be \textbf{connected} if $M^0=\0$.

       Let $M$ be an $\A(q)$-module. The \textbf{suspension} of $M$ is the $\A(q)$-module $\Sigma M$ such that $(\Sigma M)^0=0$ and $(\Sigma M^i)=M^{i-1}$ for all $i>0$, and such that $P^i (\sigma x)=\sigma(P^ix)$, where $\sigma x\in\Sigma M^{j+1}$ denotes the element representing $x\in M^j$.
   \end{defi}
   \begin{rema}
       Our definition of a reduced unstable module differs from the classical notion of reduced unstable module over the Steenrod algebra \cite[p. 47]{LS}. However, {\cite[Lemma 2.6.4]{LS}} shows that the two notions are equivalent.

       If $M$ is unstable, $\Sigma M$ is also unstable. This induces a functor $\Sigma:\U(q)\to\U(q)$.
   \end{rema}
   
   \begin{prop}[{\cite[p. 28]{LS}}]\label{prop:adjunction}
       The suspension functor $\Sigma:\U(q)\to\U(q)$ admits a left adjoint $\Omega:\U(q)\to\U(q)$. Moreover, for all unstable modules $M$,
       \[\Sigma \Omega M= M/P_0M,\]
       where $P_0M\subseteq M$ denotes the unstable submodule which is the image of the top-square.
   \end{prop}
\begin{defi}[Admissible sequences, excess, free unstable modules, see {\cite[pp. 19, 23, 25]{LS}}]\label{def:adm-excess-free}
   Let $I=(i_1,\dots,i_k)$ be a finite sequence of integers. Then $I$ is called \textbf{admissible} if for all $h\in\{1,2,\dots,k-1\}$, one has~$i_h\ge q i_{h+1}$, where $q$ is the order of the base field $\F$. The \textbf{excess} of an admissible sequence is the positive integer 
   \[
     e(I)=(i_1-qi_2)+(i_2-qi_3)+\dots+(i_{k-1}-qi_k).
   \]
  Let $P^I$ denote the product $P^{i_1}\cdots P^{i_k}$ in~$\A(q)$.

  For $n\in\N$, let $F(n)$ denote the \textbf{free unstable module} generated by one element $\iota_{n}$ of degree $n$. One has $\Hom_{\U(q)}(F(n),M) \cong M^{n}$.
\end{defi}
  The following assertions are all consequences of the definition of $F(n)$:
  \begin{itemize}
    \item \label{lemFn} The object $F(n)$ is isomorphic to:
    \[\Sigma^{n}\A(q)/(P^I \colon I \mbox{ admissible, and }e(I)>n).\]
    Therefore, the set of $P^I\iota_{n}$ where $I$ satisfies~$e(I)\le n$ is a graded vector space basis for $F(n)$. In particular, the unstable module $F(1)$ has a basis given by the set $\{j_k\}_{k\in\N}$, where $j_k=P^{q^{k-1}}\dots P^{1}\iota_1\in F(1)^{q^k}$, and $j_0=\iota_1$.
    \item\label{omegaFn} As a consequence of Proposition \ref{prop:adjunction}, for all $n>0$, there is an isomorphism in $\U(q)$:
    \[
      \Omega F(n)\cong F(n-1).
    \] 
    Indeed, for $M$ an unstable module, there is a one-to-one correspondence (natural in $M$):
    \begin{align*}
      \Hom_{\U(q)}(\Omega F(n),M)&\cong \Hom_{\U(q)}(F(n),\Sigma M)\\
      &\cong \left(\Sigma M\right)^{n}\\
      &\cong M^{n-1}\\
      &\cong\Hom_{\U(q)}(F(n-1),M).
    \end{align*}
    \end{itemize}

\begin{lemm}\label{lemm:sigmaomegaF(n)}
  There is an isomorphism $\Sigma\Omega F(n)\cong \Sigma F(n-1)$.
\end{lemm}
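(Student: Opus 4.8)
The plan is to deduce the isomorphism from the already‑established relation $\Omega^2F'(2n)\cong F'(2n-2)$ (assertion~\ref{omegaFn}) by exhibiting $\Sigma\Omega F'(2n)$ as a double suspension. The first step is to rewrite $\Sigma\Omega F'(2n)$: by the first assertion of Lemma~\ref{lemOmega} it is isomorphic to $\Coker(\lambda_{F'(2n)})$, and since $\Phi'F'(2n)$ is spanned by the elements $\Phi x$ with $\lambda_{F'(2n)}(\Phi x)=P_0 x$, the image of $\lambda_{F'(2n)}$ is the $\A'$-submodule $\Im(P_0)=\{P_0 x\colon x\in F'(2n)\}$. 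Writing $M'=F'(2n)/\Im(P_0)$, we thus have $\Sigma\Omega F'(2n)\cong M'$.

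The crux is to show that $M'$ is a double suspension. Since $P_0\bar x=\overline{P_0 x}=0$, the operation $P_0$ acts as zero on $M'$, so $\lambda_{M'}=0$ and Lemma~\ref{lemOmega} gives $\Sigma\Omega M'\cong\Coker(\lambda_{M'})=M'$; hence $M'\cong\Sigma N_1$ with $N_1=\Omega M'\cong\Sigma^{-1}M'$ an unstable module concentrated in odd degrees (cf.\ Definition~\ref{defmiscunst}). Because $N_1$ sits in odd degrees while $P_0$ sends a class of odd degree to one of even degree, $P_0$ acts as zero on $N_1$ as well; thus $\lambda_{N_1}=0$, and Lemma~\ref{lemOmega} applied to $N_1$ yields $N_1\cong\Sigma\Omega N_1=\Sigma\Omega^2 M'$. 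Therefore $M'\cong\Sigma N_1\cong\Sigma^2\Omega^2 M'$.

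It then remains to identify $\Omega^2 M'$. From $M'\cong\Sigma\Omega F'(2n)$ and the natural isomorphism $\Omega\Sigma\cong\id$ (the counit of the adjunction $(\Omega,\Sigma)$, Definition~\ref{defmiscunst}) applied to $\Omega F'(2n)$, one gets $\Omega^2 M'\cong\Omega^2\Sigma\Omega F'(2n)\cong\Omega^2 F'(2n)\cong F'(2n-2)$ by assertion~\ref{omegaFn}. Combining the three steps, $\Sigma\Omega F'(2n)\cong M'\cong\Sigma^2\Omega^2 M'\cong\Sigma^2 F'(2n-2)$, which is the claim.

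I expect the main obstacle to be the second paragraph, namely the general principle that vanishing of $P_0$ on an object of $\U'$ forces it to be a double suspension; the point where one must be careful is the descent to the desuspension $N_1=\Omega M'$ and the verification that $P_0$ still vanishes there. (A more computational route avoids this altogether: using the admissible-monomial basis of $F'(2n)$, one checks that $\Im(P_0)$ is spanned by the monomials $P^I\iota_{2n}$ of maximal excess, that this subspace coincides with the kernel of the canonical surjection $F'(2n)\twoheadrightarrow\Sigma^2F'(2n-2)$ sending $\iota_{2n}$ to $\sigma^2\iota_{2n-2}$, and concludes by the first isomorphism theorem, at the cost of a short computation with Adem-admissibility and the excess bound defining $F'(2m)$.)
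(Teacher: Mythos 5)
Your argument is correct, but it is a genuinely different route from the paper's. The paper proves the lemma by pure computation with the presentation of Lemma \ref{lemFn}: it writes $F'(2n)\cong\Sigma^{2n}\A'/(P^I\colon e(I)>2n)$, observes that $\Im(P_0)$ is spanned by the admissible $P^I\iota_{2n}$ of excess exactly $2n$, and, since the excess is even, identifies the quotient with $\Sigma^{2n}\A'/(P^I\colon e(I)>2n-2)\cong\Sigma^2F'(2n-2)$ --- essentially the parenthetical alternative you sketch at the end. You instead argue structurally: $\Sigma\Omega F'(2n)\cong F'(2n)/\Im(P_0)$ is evenly graded with $P_0=0$, hence (via the unconditional isomorphism $\Sigma\Omega M\cong\Coker\lambda_M$ of Lemma \ref{lemOmega}, the formula $\Omega M\cong\Sigma^{-1}(M/\Im P_0)$, and the parity observation that $P_0=\beta P^i$ raises odd degrees to even ones) it is a double suspension, and then $\Omega\Sigma\cong\id$ together with $\Omega^2F'(2n)\cong F'(2n-2)$ finishes the proof. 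The step you flagged as the main obstacle does go through: the only input at the desuspension stage is that $N_1$ is concentrated in odd degrees, so $P_0N_1\subseteq N_1^{\mathrm{even}}=0$, whence $\lambda_{N_1}=0$. What each approach buys: the paper's proof is short because the admissible basis and excess are already on the table, but it is specific to $F'(2n)$ and tied to the chosen presentation; your proof avoids any basis or excess computation and in fact establishes the more general fact that $\Sigma\Omega M\cong\Sigma^2\Omega^2 M$ for every object $M$ of $\U'$, the lemma being the special case $M=F'(2n)$ combined with assertion \ref{omegaFn} (which requires $n>0$, as in the paper).
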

\begin{proof}
  $F(n)$ is isomorphic to:
    \[\Sigma^{n}\A(q)/(P^I \colon I \mbox{ admissible, and }e(I)>n).\]
  Note that the image of $P_0$ in this module is spanned by the $P^I$ with~$e(I)=n$. So, $\Sigma\Omega F(n)$ is isomorphic to:
  \[\Sigma^{n}\A(q)/(P^I \colon I \mbox{ admissible, and }e(I)>n-1),\]
  and this is isomorphic to~$\Sigma F(n-1)$.
\end{proof}

    \section{Algebras in unstable modules}\label{sec:unstalg}
    In this very short section, we study algebras over an operad in the category of $\A(q)$-modules, and in the category $\U(q)$, using the notions introduced in Section \ref{sec:modbialg}.
\begin{prop}
  A $\P$-algebra in $\A(q)$-module (see Definition \ref{def:PalginBmod}) is a $\P$-algebra $M$ endowed with an action of $\A(q)$ that satisfies the (generalised) Cartan formula, that is, for all $\mu\in\P(n)$, $(x_i)_{1\le i\le n}\in M^{\times n}$,
  \[P^i \mu(x_1,\dots,x_n)=\sum_{i_1+\dots+i_n=i}\mu(P^{i_1}x_1,\dots,P^{i_n}x_n).\]
\end{prop} 
\begin{proof}
    This is an easy verification, noting that the $n-1$-th iterated coproduct $\Delta^{n-1}:\A(q)\to \A(q)^{\otimes n}$ sends $P^i$ to $\sum_{i_1+\dots+i_n=i}P^{i_1}\otimes\cdots\otimes P^{i_n}$.
\end{proof}
\begin{defi}
    A $\P$-algebra in $\A(q)$-modules which is unstable as an $\A(q)$-module is called a \textbf{$\P$-algebra in unstable module}.

    Let $\P_{\alg}^{\U(q)}$ denote the full subcategory of $\P_{\alg}^{\A(q)}$ with objects the $\P$-algebras in unstable modules.
\end{defi}
\begin{prop}
    If $M$ is an unstable module, then $S(\P,M)$ is a $\P$-algebra in unstable modules. The forgetful functor $\P_{\alg}^{\U(q)}\to \U(q)$ admits as left adjoint the functor $S(\P,-):\U(q)\to\P_{\alg}^{\U(q)}$.
\end{prop}
\begin{proof}
    This is a straightforward generalisation of \cite[Proposition 2.5]{SI2}, which covers the case $p=2$.
\end{proof}

    \section{Unstable algebras over the Steenrod algebra}\label{sec:unst}
   The classical notion of an unstable algebra over the Steenrod algebra is that of a unital, commutative, associative algebra $A$, which is also an unstable module over the Steenrod algebra, and satisfies the Cartan formula and the additional relation $P_0x=x^q$, also called instability. In our terminology, this is an object of $\uCom_{\alg}^{\U(q)}$ satisfying the instability relation. A variant of this notion is the notion of unstable level algebra due to \cite{CL}. Here we generalise the notion of unstable algebra to algebras over any operad $\P$ equipped with a $q$-ary operation $\star$.

   In this section we introduce the Brown--Gitler and Carlsson modules and algebra. These objects come equipped with a $q$-ary operation, but are not unstable algebras in the classical sense: in the Brown-Gitler algebra and Carlsson algebra for example, $P_0x$ is not equal to $x^q$, but instead, $P_0x=\phi(x)^q$ where $\phi$ is a certain endomorphism. These provide examples of unstable $q$-level algebras, where this notion has been introduced in Section \ref{sec:Lev}. In Section \ref{sec:BGC}, we will see that some of those objects are in fact free, as unstable algebras over certain operads.

   \begin{defi}[$\star$-Unstable $\P$-algebras]
       Let $\star\in\P(q)$. A \textbf{$\star$-unstable $\P$-algebra} is a $\P$-algebra $A$ in $\U(q)$ such that $P_0a=\star(a,\dots,a)$ for all $a\in A$.

       The $\star$-unstable $\P$-algebras form a full subcategory of $\P_{\alg}^{\U(q)}$ which is denoted by $\K_{\P}^\star$.

       As an example, an \textbf{unstable $q$-level algebra} is a $\star$-unstable $\Lev_q$-algebra, where $\Lev_q$ is defined in \ref{defi:Lev} and $\star\in\Lev_q$ is the operadic generator.
   \end{defi}

   When $\P=\uCom$ and $\star=X_q$ (see Example \ref{ex:operads}), we recover the classical notion of unstable algebras from, for example, \cite{K3}.
   
   Let us give some examples of $\star$-unstable $\P$-algebras that appear in literature for different operads $\P$ and operations $\star$. The classic example is the following:
   \begin{ex}
       Let $X$ be a topological space. Then, the cohomology of $X$ with coefficients in $\F_p$ inherits an unstable $\A(p)$-action (see, for example, \cite[Theorem 1.1.1.]{LS}). The cup-product endows $H^*(X,\F_p)$ with the structure of a unital, commutative, associative algebra in $\U(p)$, with the additional relation $P_0x=x^p$. In other words, $H^*(X,\F_p)$ is an $X_p$-unstable $\uCom$-algebra (see Example \ref{ex:operads} for the definition of $\uCom$ and $X_p$).
   \end{ex}
   We now introduce unstable modules which are analogues of the Brown-Gitler and Carlsson modules, following the notation of \cite{K3}. These modules form algebras which, as we show, are unstable over certain operads. We choose to define these objects using their algebraic structures. They are identified with injective objects, as is shown in \cite{K3}. \cite{BG,HM,GC}, and \cite{LS} for a textbook account.
   \begin{defi}[Brown--Gitler modules and algebra, Carlsson modules and algebra]\label{def:BGC}
       The \textbf{Brown--Gitler algebra} $J$ is the $\uCom$-algebra in $\U(q)$ whose underlying $\uCom$-algebra is the polynomial algebra $\F[x_i,i\in\N]$, with $|x_i|=1$, endowed with the (unstable) action of $\A(q)$ induced on generators by:
\begin{equation}
  P^jx_{i}=\left\{\begin{array}{ll}
    x_i,&\mbox{if }j=0,\\
    x_{i-1}^p,&\mbox{if }j=1,\\
    0,&\mbox{otherwise,}
  \end{array}\right.
\end{equation}
  where we set $x_{-1}=0$.

  The Brown--Gitler algebra is equipped with a second grading $w$ called the \textbf{weight}, which is additive with respect to multiplication, and such that $w(x_i)=q^{i}$.

  The \textbf{Brown--Gitler module of weight $n$} is the submodule $J(n)\subseteq J$ of homogeneous elements of weight $n$.

  The \textbf{Carlsson algebra} $K$ is the $\uCom$-algebra in $\U(q)$ whose underlying $\uCom$-algebra is the polynomial algebra $\F[x_i,i\in\Z]$, with $|x_i|=1$, endowed with the (unstable) action of $\A(q)$ induced on generators by:
\begin{equation}
  P^jx_{i}=\left\{\begin{array}{ll}
    x_i,&\mbox{if }j=0,\\
    x_{i-1}^p,&\mbox{if }j=1,\\
    0,&\mbox{otherwise.}
  \end{array}\right.
\end{equation}

  The Carlsson algebra is equipped with a second grading $w$ called the \textbf{weight}, additive with respect to multiplication, with $w(x_i)=q^{i}$. Note that this weight has range $\N[\frac1q]$.

  The \textbf{Carlsson module of weight $n$} is the submodule $K(n)\subseteq K$ of homogeneous elements of weight $n$.
   \end{defi}
   \begin{rema}\label{rema:Jlim}
       Consider the algebra endomorphism $\phi:J\to J$ sending $x_i$ to $x_{i-1}$. This induces morphisms of unstable modules $J(qn)\to J(n)$. For $n\in\N$, the Carlsson module $K(n)$ is isomorphic to the limit in $\U(q)$ of the diagram:
       \[
        \diag{J(n)&J(qn)\ar[l]_{\phi}&J(q^2n)\ar[l]_{\phi}&\cdots\ar[l]}.
       \]
       This implies that there are isomorphisms $K(n)\cong K(qn)$. In fact, these isomorphisms are induced by the algebra isomorphism $\phi:K\to K$ sending $x_i$ to $x_{i-1}$.
   \end{rema}
From the very definition of these objects, it is clear that neither the Brown--Gitler algebra nor the Carlsson algebra are $X_p$-unstable $\uCom$ algebra. However, we have the following:
\begin{lemm}
    The Brown--Gitler algebra and the Carlsson algebra are equipped with a $\star$-unstable $\Lev_q$-algebra structure, where $\star$ and $Lev_q$ are defined in \ref{defi:Lev}.
\end{lemm}
\begin{proof}
    Note that $\phi:J\to J$ and $\phi:K\to K$ defined above are endomorphisms of $\uCom$-algebras in $\U(q)$. So, $J$ and $K$, endowed with $\phi$, form two $\uCom\circ\D$-algebras, where $\uCom\circ\D$ is defined in \ref{rema:uComcircD} and $D$ acts by $d=\phi$. Since $\uCom\circ\D$ is isomorphic to $\F\Pi$ (see \ref{rema:uComcircD}), and $\Lev_q$ is defined as a suboperad of $\F\Pi$ (see \ref{defi:Lev}), it ensues that $J$ and $K$ can be equipped with the structure of a $\Lev_q$-algebra in $\U(q)$ by restriction of structure. Since $\Lev_q$ is generated by the element $\star=(\emptyset,[q])\in\F\Pi(q)$, which corresponds to the operation $(X_q;d,\dots,d)$ under the isomorphism $\F\Pi\cong\uCom\circ\D$, these $\Lev_q$-algebra structures on $J$ and $K$ are defined by:
    \[
    \star(a_1,\dots,a_q)=\phi(a_1)\cdots \phi(a_q),
    \]
    for all elements $a_1,\dots,a_q$ of $J$, or of $K$. To show that $J$ and $K$ are $\star$-unstable, it suffices to check that $P_0x_i=\star(x_i,\dots,x_i)$ on the generators $x_i$. But $P_0x_i=x_{i-1}^{q}$, and $\star(x_i,\dots,x_i)=\phi(x_i)^q=x_{i-1}^q$.
\end{proof}
\begin{rema}\label{rema:JKLevalg}
    For all $n$, the $\Lev_q$-operation $\star$ defined above on $J$ and $K$ stabilise the modules $J(n)$ and $K(n)$. Indeed, note that $\phi$ divides the weight by $q$, so, if $a_1,\dots,a_q$ all have weight $n$, $\star(a_1,\dots,a_q)=\phi(a_1)\cdots \phi(a_q)$ is a product of $q$ elements of weight $n/q$.
\end{rema}
   \section{Free unstable algebras}\label{sec:freeunst}
   
    The aim of this section is to prove our main result, Theorem \ref{theo:freeunst}, which identifies free unstable algebras generated by certain unstable modules to free algebras. More precisely, given an operad $\P$ equipped with a $\P$-central operation $\star\in\P(q)^{\∑_q}$ (see Definition \ref{def:centop}), we will build a commutative diagram of categories:
\begin{equation}\label{diag:complete}\tag{D2}
    \diag{\Vect\ar[d]_{\D\otimes -}\ar[r]^{S(\P,-)}&\P_{\alg}\\\D_{\mod}\ar[r]^{\psi_!}&\P_{\alg}^{\D}\ar[u]\\
\U(q)\ar[u]^{\varphi}\ar[r]^{K_\P^{\star}}&\K_{\P}^{\star}\ar[u]_{\varphi}},
\end{equation}
where the top square is the diagram \ref{diag:freecomp} from Corollary \ref{coro:freecomp}, and where the bottom functor, $\psi_!:\U(q)\to \K^{\star}_{\P}$, is left adjoint to the forgetful functor $\K^{\star}_{\P}\to \U(q)$. We will then show that the image of certain unstable modules (namely, connected, reduced unstable modules) under $\varphi:\U(q)\to \D_{\mod}$, are in the essential image of $D\otimes-:\Vect\to\D_{\mod}$.
    
For any unstable module $M$, denote by $\varphi M$ the $\D$-module whose underlying vector space is $M$ and such that $dx=P_0M$. This induces a functor $\varphi:\U(q)\to \D_{\mod}$.

   \begin{lemm}\label{lemm:topsqmor}
       Let $A$ be a $\P$-algebra in $\U(q)$. Then, $P_0$ is a $\P$-algebra endomorphism of $A$. In other words, the functor $\varphi:\U(q)\to\D_{\mod}$ extends into a functor $\varphi_{\P}:\P_{\alg}^{\U(q)}\to\P_{\alg}^{\D}$, which restricts to a functor $\bar\varphi_{\P}:\K_{\P}^*\to \P_{\alg}^{\D}$.
   \end{lemm}
   \begin{proof}
       This is a straightforward generalisation of \cite[Corollary 2.7]{SI2}, which covers the case $q=2$.
   \end{proof}
   
   Recall that, for any $\star\in\P(q)$, we built, in Section \ref{sec:centop}, a functor $\psi_!:\D_{\mod}\to \P_{\alg}$ that sends a $\D$-module $M$ to the free $\P$-algebra over $M$ satisfying $dx=\star(x,\dots,x)$ for all $x\in M$. 
   \begin{prop}\label{prop:KPstar}
       Suppose that $\star\in\P(q)$ is fixed under the action of $\∑_q$. The functor $\psi_!\circ\varphi:\U(q)\to \P_{\alg}$ extends into a functor $K_\P^\star:\U(q)\to\P_{\alg}^{\U(q)}$.

       Additionally, if $\star$ is $\P$-central (see Definition \ref{def:centop}), then $K_\P^\star$ restricts into a functor $K_\P^\star:\U(q)\to\K_{\P}^{\star}$.
   \end{prop}
   \begin{proof}
       For any unstable module $M$, the $\P$-algebra $\psi_!\circ\varphi(M)$ is defined as $S(\P,M)/(P_0^kx-\star_k(x,\dots,x),x\in M,k\in\N)$. Note that $S(\P,M)$ is also the free $\P$-algebra in $\U(q)$ generated by $M$. In other words, $S(\P,M)$ can be equipped with an unstable $\A(q)$-action by:
       \[
        P^i\cdot (\mu;x_1,\dots,x_n)=\sum_{j_1+\dots+j_n=i}(\mu;P^{j_1}x_1,\dots,P^{j_n}x_n).
       \]
       To make sure that this induces an unstable $\A(q)$-action on $S(\P,M)/(P_0^kx-\star_k(x,\dots,x),x\in M,k\in\N)$, it suffices to check that the set $X=\{P_0^kx-\star_k(x,\dots,x),x\in M,k\in\N\}$ is stable under the action of $\A(q)$. For all $k,i\in \N$ and $x\in M$, one can show that
       \[
        P^iP_0^kx=\begin{cases}
            P_0^kP^{\frac{i}{q^k}}x,&\mbox{if }q^k|i,\\
            0,&\mbox{otherwise.}
        \end{cases} 
       \]
       See for example, \cite[Section 1.7]{LS}. On the other hand, one has
  \begin{equation*}
    P^i(\star_k;x,\dots,x)=\sum_{j_1+\dots+j_{q^k}=i}(\star_k;P^{j_1}x,\dots, P^{j_{q^k}} x).
  \end{equation*}
  Fix $j'_1\le\dots\le j'_{q^k}\in\N$ such that $j'_1+\dots+j'_{q^k}=i$. Suppose that we have:
  \begin{multline*}
        j'_1=j'_2=\dots=j'_{r_1}<j'_{r_1+1}=\dots=j'_{r_1+r_2}\\
        <\dots<j'_{r_1+\dots+r_{s-1}+1}=\dots=j'_{r_1+\dots+r_s},
  \end{multline*}
  with $r_1+\dots+r_s=q^k$, that is, $r_1+\dots+r_s$ is the coarsest partition of the integer $q^k$ such that $j'_l=j'_{l'}$ if and only if there exists $m\in[s]$ such that $r_1+\dots+r_{m-1}<l,l'<r_1+\dots+r_m$. Since $\star$ is fixed under the action of $\∑_q$, the term $\star(P^{j'_1}x,\dots, P^{j'_q} x)$ appears in the above sum exactly $\frac{q!}{r_1!\dots r_s!}$ times. This integer is divisible by $q$ unless $s=1$, in which case, $r_1=q$, and the condition $j'_1+\dots+j'_q=i$ implies that $i\equiv 0\ [q]$ and $j'_1=\dots=j'_q=\frac{n}{q}$.

  This implies that
  \begin{equation*}
    P^i(\star;x,\dots,x)=\begin{cases}
      \left(\star;P^{i/q}x,\dots,P^{i/q}x\right),&\mbox{if } i\equiv 0\ [q]\\
      0,&\mbox{ otherwise.}
    \end{cases}
  \end{equation*}

  Since both $0$ and $P_0P^{\frac{i}{q}}x-\left(\star;P^{i/q}x,\dots,P^{i/q}x\right)$ belong to $X$, we conclude that $X$ is stable under the action of $\A(q)$. So we defined an unstable $\A(q)$-action on $\psi_!\circ\varphi(M)$ which is compatible with the $\P$-algebra structure.

  To prove the second assertion of our proposition, suppose now that $\star$ is $\P$-central. We want to show that $\psi_!\circ\varphi(M)$, with the above $\A(q)$-action, is $\star$-unstable. In other words, we want to show that, for all $\mu\in\P(n)$, $x_1,\dots,x_n\in M$, one has:
  \[
    P_0(\mu;x_1,\dots,x_n)\sim_{X}\star((\mu;x_1,\dots,x_n)^{\times q})=(\star(\mu^{\times q});x_1,\dots,x_n,\dots,x_1,\dots,x_n),
  \]
  where $\sim_{X}$ denotes the equivalence relation defined by the quotient by the $\P$-ideal generated by $X$. From Lemma \ref{lemm:topsqmor}, we deduce that:
  \[
    P_0(\mu;x_1,\dots,x_n)=(\mu;P_0x_1,\dots, P_0x_n).
  \]
    But, one has:
    \[
    (\mu;P_0x_1,\dots, P_0x_n)\sim_{X}(\mu,\star(x_1^{\times q}),\dots,\star(x_n^{\times q}))
    \]
    This last element is equal to:
    \[
    (\mu(\star^{\times n});x_1^{\times q},\dots,x_n^{\times q}).
    \]
    Using the fact that $\star$ is $\P$-central, $\mu(\star^{\times n})=\star(\mu^{\times q})\cdot\sigma_{q,n}^{-1}$, so, 
    \[
(\mu(\star^{\times n});x_1^{\times q},\dots,x_n^{\times q})=(\star(\mu^{\times q});x_1,\dots,x_n,\dots,x_1,\dots,x_n).
    \]
    This concludes our proof.
   \end{proof}
   \begin{prop}
       Let $\star\in\P(q)$ be a $\P$-central operation fixed under the action of $\∑_q$. Then $K_\P^\star(M)$ is the free $\star$-unstable $\P$-algebra over $M$. In other words, $K_\P^\star:\U(q)\to\K_{\P}^{\star}$ is a left adjoint to the forgetful functor $\K_{\P}^{\star}\to \U(q)$.
   \end{prop}
   \begin{proof}
       This is a somewhat straightforward generalisation of \cite[Proposition 6.7]{SI2}. Since we are using different constructions in this article, let us give the detailed proof.
       
       Let $M$ be an object in $\U(q)$, $A$ an object in $\K_\P^{\star}$. We show that there is a bijection $\Hom_{\U(q)}(M,A)\cong \Hom_{\K_{\P}^{\star}}(K_\P^{\star}(M),A)$. 
       
    Recall from the construction of $K_{\P}^\star$ that there is a surjective morphism $S(\P,M)\to K_{\P}^\star(M)$ in $\U(q)$ (see the proof of \ref{prop:KPstar}). The unit of the monad $S(\P,-)$ provides a morphism of unstable modules $M\to S(\P,M)$. To any morphism $f:K_\P^{\star}(M)\to A$ in $\K_\P^{\star}$, we associate the morphism in $\U(q)$ which is the following composition:
    \[
    \bar f:\diag{M\ar[r]& S(\P,M)\ar[r]& K_\P^{\star}(M)\ar[r]^-{f}& A.}
    \]
    Let now $g:M\to A$ be a morphism in $\U(q)$. There is a unique morphism $g':S(P,M)\to A$ in $\P_{\alg}^{\U(q)}$. For any $x\in M$, since $g'$ is compatible with the action of $\A(q)$, $g'(P_0x)=P_0g'(x)$. Since $g'$ is compatible with the $\P$-algebra structures, $g'(\star(x,\dots,x))=\star(g'(x),\dots,g'(x))$. Finally, since $A$ is unstable, $P_0g'(x)=\star(g'(x),\dots,g'(x))$. So, for any $x\in M$, $g'(P_0x-\star(x,\dots,x))=0$, which implies that $g'$ passes to the quotient into a morphism 
    \[
    \hat g:S(P,M)/(P_0x-\star(x,\dots,x),x\in M)\to A
    \]
    in $\P_{\alg}^{\U(q)}$, which can be seen as a morphism $\hat g:K_\P^{\star}(M)\to A$. Showing that the associations $f\mapsto \bar f$ and $g\mapsto \hat g$ provide inverse bijections between $\Hom_{\U(q)}(M,A)$ and $\Hom_{\K_{\P}^{\star}}(K_\P^{\star}(M),A)$ is a straightforward verification that is left to the reader.
   \end{proof}
   \begin{lemm}\label{lemm:freetopsq}
       For any connected reduced unstable module $M$, $\varphi(M)$ is isomorphic to $D\otimes \Forget(\Sigma\Omega M)$, where $\Forget:\U(q)\to \Vect$ is the forgetful functor that extracts the underlying vector space of an unstable module. This isomorphism is not unique.
   \end{lemm}
   \begin{proof}
       The unit of the adjunction $\Sigma \dashv \Omega$ from Proposition \ref{prop:adjunction} provides a map $M\to \Sigma\Omega M= M/P_0M$. Pick any linear section $s:M/P_0M\to M$ (this is not a morphism in $\U(q)$). The fact that $M$ is connected and that $P_0$ is injective and doubles the degree then implies that $M$ is freely generated by $s(\Sigma\Omega M)$ under the action of $P_0$.
   \end{proof}
We now obtain our main result as an easy consequence of the preceding results:
   \begin{theo}\label{theo:freeunst}
       Let $\star\in\P(q)$ be a $\P$-central operation fixed under the action of $\∑_q$. For all connected reduced $M$, the underlying $\P$-algebra of $K_{\P}^\star(M)\in\K_\P^\star(M)$ is isomorphic to $S(\P,\Forget(\Sigma\Omega M))$. This isomorphism is not unique.
   \end{theo}
   \begin{proof}
       Consider the diagram of categories \ref{diag:complete} from the beginning of the present section. Corollary \ref{coro:freecomp} shows that the top square commutes. The functor $K_\P^{\star}$ has been constructed in Proposition \ref{prop:KPstar} so that the bottom square commutes.

       Let $M$ be a connected, reduced unstable module. Lemma \ref{lemm:freetopsq} shows that $\varphi(M)=D\otimes \Forget(\Sigma\Omega M)$. So, the $\P$-algebra in $\D$-modules $\varphi\circ K_{\P}^\star(M)$ is isomorphic to $\psi_!(D\otimes\Forget(\Sigma\Omega M))$. Its underlying $\P$-algebra is then isomorphic to $S(\P,\Forget(\Sigma\Omega M))$. Since the underlying $\P$-algebra of $\varphi\circ K_{\P}^\star(M)$ is also the underlying $\P$-algebra of $K_{\P}^\star(M)$, we obtain the result.
   \end{proof}
    \section{Applications}\label{sec:BGC}
        In this section, we apply our result to free unstable modules to obtain a description of the free $\star$-unstable $\P$-algebra generated by one element. This allows us identify certain Brown--Gitler modules and Carlsson modules from Section \ref{sec:unst} to free unstable algebras over certain operads.
        
       Theorem \ref{theo:freeunst} shows that, for $\star\in \P(q)$ a $\P$-central operation fixed under the action of $\∑_q$, the free $\star$-unstable $\P$-algebra generated by a connected, reduced unstable module $M$ is itself free as a $\P$-algebra, generated by the underlying vector space of $\Sigma \Omega M$. In particular, if $\Sigma \Omega M$ comes with a natural choice of linear basis, we can give a basis of the $\P$-algebra $K_{\P}^{\star}(M)$. 

       Recall from Section \ref{sec:Steenrodalg} that the free unstable module $F(n)$ over an element of degree $n$ satisfies $\Sigma\Omega F(n)\cong\Sigma F(n-1)$. In particular, the underlying vector space of $\Sigma\Omega F(1)\cong\Sigma F(0)$ is one dimensional, concentrated in arity 1. We then get: 
    \begin{prop} \label{prop:KF1}
  Let $\star\in \P(q)$ be a $\P$-central operation fixed under the action of $\∑_q$. Then, $K_\P^\star(F(1))$ is the $\P$-algebra in $\U(q)$ whose underlying $\P$-algebra is freely generated by one element $\iota_1$ of degree $1$, endowed with the unstable action of $\A(q)$ induced by:
  \[P^j\iota_1=\left\{\begin{array}{ll}
    \iota_1,&\mbox{if }j=0,\\
    \star(\iota_1,\dots,\iota_1),&\mbox{if }j=1,\\
    0,&\mbox{otherwise.}
  \end{array}\right.\]
    \end{prop}
    \begin{proof}
        This is a straightforward generalisation of \cite[Proposition 8.4]{SI2}, which treats the case $q=2$. In particular, since $\Sigma\Omega F(1)\cong\Sigma F(0)$, and applying Theorem \ref{theo:freeunst}, we know that $K_\P^\star(F(1))$ is a $\star$-unstable $\P$-algebra whose underlying $\P$-algebra is generated by one element $\iota_1$ of degree 1. Since $K_\P^\star(F(1))$ is a $\P$-algebra in $\U(q)$, it suffices to inspect the action of $\A(q)$ on $\iota_1$, and the instability relation reads $P^i\iota_1=0$ for all $i>1$. Since $K_\P^\star(F(1))$ is $\star$-unstable, we necessarily have $P^1\iota_1=P_0\iota_1=\star(\iota_1,\dots,\iota_1)$.
    \end{proof}

    \begin{ex}
        When $\P=\uCom$ and $\star=X_q$, the functor $K_{X_q}^{\uCom}$ corresponds to the functor denoted by $U$ in \cite{K4}. In this case, Theorem \ref{theo:freeunst} corresponds to a remark of Kuhn \cite[p. 4223]{K4}. In \cite[Theorem 1.6]{K4}, Kuhn identifies a large family of free unstable algebras generated by unstable modules defined as representations of symmetric powers. This contains, for example, the computation of the mod 2 cohomology of Eilenberg--MacLane spaces of finite elementary abelian 2-groups \cite[Remark 1.8 (2)]{K4}.
    \end{ex}
    \begin{prop}[Compare with {\cite[Proposition 9.9 and 9.10 ]{SI2}}]\label{prop:K1}\item
  \begin{enumerate}[1)]
      \item The Carlsson module of weight one $K(1)$ (see Definition \ref{def:BGC}) with its $\Lev_q$ operation from Remark \ref{rema:JKLevalg}, is isomorphic to the free $\star$-unstable $\Lev_q$-algebra generated by~$F(1)$.
      \item For all $s\ge 1$, the Brown--Gitler module $J(q^s)$, with its $\Lev_q$ operation from Remark \ref{rema:JKLevalg}, is isomorphic to the free $\star$-unstable $T_s\Lev_q$-algebra (see \ref{def:TsLev}) generated by~$F(1)$.
      \item The Carlsson algebra $K$ (see Definition \ref{def:BGC}) with the multiplication of monomials and the operator $\phi$, is isomorphic to the free $(X_q;d,\dots,d)$-unstable $\uCom\circ\Dp$-algebra generated by~$F(1)$.
  \end{enumerate}
    \end{prop}
\begin{proof}\item
\begin{enumerate}[1)]
    \item Since $K(1)$ is a $\star$-unstable $\Lev_q$-algebra and $x_0\in K(1)$ has degree 1, there is a unique morphism $f:K_{\Lev_q}^{\star}(F(1))\to K(1)$ in $\K_{\Lev_q}^{\star}$ which sends the generator $\iota_1$ to $x_0$.

By definition, the vector space underlying $K(1)$ is generated by the monomials $\t=x_{i_1}^{k_1}\dots x_{i_n}^{k_n}$, of degree $k_1+\dots+k_n$, satisfying $\sum_{j=1}^nq^{i_j}=1$. Its $q$-level operation is induced by $(\t_1,\dots,\t_p)\mapsto \phi(\prod_{j=1}^p\t_j)$, where $\phi\colon K\to K$ is the algebra morphism induced by $x_i\mapsto x_{i-1}$.

  According to Proposition \ref{prop:KF1}, The free unstable $q$-level $\Lev_q$-algebra generated by an element of degree one has an underlying $\Lev_q$-algebra freely generated by one element of degree $1$. This means that, as a vector space, it is isomorphic to $\bigoplus_{n\ge 0}\Lev_q(n)/\∑_n$, with $\Lev_q(n)/\∑_n$ in degree $n$. Recall from Lemma \ref{lemm:partLev} That $\Lev_q(n)\subset \F\Pi$ is spanned by partitions $J$ of $[n]$ such that $\sum_{i=0}^s\frac{|J_i|}{q^i}=1$. Under the identification $K_{\Lev_q}^{\star}(F(1))\cong\bigoplus_{n\ge 0}\Lev_q(n)/\∑_n$, the map $f$ sends the class of a partition $I$ to $\prod_{j\in \N}x_{-j}^{|I_j|}$.
    
    A set of representatives of $\Lev_q(n)/\∑_n$ is given by the vector subspace $V(n)\subset\Lev_q(n)$ spanned by partitions $J$ such that, if $i\in J_{k}$ and $j\in J(l)$, $k<l$ implies $i<j$ (in other words, ordered partitions of $[n]$ such that the elements of $[n]$ appear in order). The generator $\iota_1\in K_{\Lev_q}^{\star}(F(1))$ is identified with the partition $(\emptyset,[q])\in V(q)$.

   Let $g:K(1)^n\to V(n)$ be the linear map sending the monomial $\t=x_{i_1}^{k_1}\dots x_{i_n}^{k_n}$ to the partition $I^{\t}$ of $k_1+\dots+k_n$ such that, for all $j\in[n]$,  $\{k_1+\dots+k_{j-1}+1,\dots,k_1+\dots+k_{j}\}\subseteq I^{\t}_{-i_j}$.

   The map $g$ is a linear inverse to $f$, proving the isomorphism.

   \item The proof is similar to 1). As a vector space, $J(q^s)$ is spanned by monomials $\t=x_{i_1}^{k_1}\dots x_{i_n}^{k_n}$, of degree $k_1+\dots+k_n$, satisfying $\sum_{j=1}^nq^{i_j}=q^s$, with $i_j\ge 0$ for all $j$. In such a monomial, no $i_j$ can be greater than $s$, otherwise, the sum will be greater than $q^s$. The $q$-level operation is induced by $(\t_1,\dots,\t_p)\mapsto \phi(\prod_{j=1}^p\t_j)$, where $\phi\colon J\to J$ is the algebra morphism induced by $x_i\mapsto x_{i-1}$ with $x_{-1}=0$. 
   
   There is a unique morphism $f:K_{\Lev_q}^{\star}(F(1))\to J(q^s)$ sending the generator $\iota_1$ to $x_s$. Under the identification $K_{\Lev_q}^{\star}(F(1))\cong\bigoplus_{n\ge 0}\Lev_q(n)/\∑_n$ given above, the map $f$ sends the class of a partition $I$ to $\prod_{j\in \N}x_{s-j}^{|I_j|}$, where $x_{s-j}=0$ whenever $j>s$. The morphism of operads $\Lev_q\to T_s\Lev_q$ induces a morphism of $\star$-unstable $\Lev_q$-algebras $K_{\Lev_q}^{\star}(F(1))\to K_{T_s\Lev_q}^{\star}(F(1))$ (see Remark \ref{rema:Morop}).
   Our morphism $f$ factors through $K_{T_s\Lev_q}^{\star}(F(1))$. Indeed, if $I\in \Lev_q^{>s}$, there is a $j>s$ such that $I_{j}\neq \emptyset$, so the image of the class of $I$ under $f$ will be a monomial containing $x_{s-j}^{|I_j|}$, which is equal to 0.

   A set of representatives of $T_s\Lev_q(n)/\∑_n$ is given by the vector subspace $V^{\le s}(n)\subset T_s\Lev_q(n)$ spanned by partitions $I$ such that, if $i\in J_{k}$ and $j\in J(l)$, $k<l$ implies $i<j$ (in other words, ordered partitions of $[n]$ such that the elements of $[n]$ appear in order), and such that $I_j=\emptyset$ if $j>s$. The generator $\iota_1\in K_{T_s\Lev_q}^{\star}(F(1))$ is identified with the partition $(\emptyset,[q])\in V^{}(q)$.

   Let $g:J(q^s)^n\to V^{\ge s}(n)$ be the linear map sending the monomial $\t=x_{i_1}^{k_1}\dots x_{i_n}^{k_n}$ to the partition $I^{\t}$ of $k_1+\dots+k_n$ such that, for all $j\in[n]$,  $\{k_1+\dots+k_{j-1}+1,\dots,k_1+\dots+k_{j}\}\subseteq I^{\t}_{-i_j}$.

   Then $g$ is a linear inverse to $f$.
   
   \item This is a straightforward generalisation of the proof of the first assertion of \cite[Proposition 9.10]{SI2}, which treats the case $q=2$. 
\end{enumerate}
\end{proof}
    \begin{rema}\label{rema:Morop}
       Let $f:\P\to\Q$ be an operad morphism. Then, for all $\star\in\P(q)$, $f$ induces a restriction functor $f^*:\Q^{\U(q)}_{\alg}\to \P^{\U(q)}_{\alg}$. One can readily check that this restricts into a functor: $f^*:\K_{\Q}^{f(\star)}\to\K_{\P}^\star$.

       In this same setting, $f:\P\to\Q$ also induces a morphism $f_*:K_\P^\star(M)\to f^*\left( K_Q^\star(M)\right)$, natural in $M$.
    \end{rema}
    \begin{ex}
        Recall from \ref{prop:K1} that the Carlsson module of weight one $K(1)$ is isomorphic to the free $\star$-unstable $\Lev_q$-algebra generated by $F(1)$, and that the Carlsson algebra $K$ is isomorphic to the free $(X_q;d\dots,d)$-unstable $\uCom\circ\D$-algebra generated by $F(1)$. In both case, the generator of $F(1)$ is identified with $x_0$, making it easy to check that the injection $K(1)\to K$ corresponds to the map $f_*:K_{\Lev_q}^\star(F(1))\to f^*(K_{\uCom\circ\D}^{(X_q;d,\dots,d)}(F(1)))$, where $f:\Lev_q\to \uCom\circ\D$ is the injection, sending $\star$ to $(X_q;d,\dots,d)$.
    \end{ex}
    \begin{prop}
        The Carlsson module $K(1)$ is the limit of the Brown--Gitler module $J(q^s)$ in the category of $\star$-unstable $\Lev_q$-algebras.
    \end{prop}
    \begin{proof}
        Recall from Remark \ref{rema:Jlim} that $K(1)$ is isomorphic to the limit of the following diagram in $\U(q)$:
    \[
        \diag{J(n)&J(qn)\ar[l]_{\phi}&J(q^2n)\ar[l]_{\phi}&\cdots\ar[l]}.
    \]
        We have seen in \ref{prop:K1} that for all $s\ge 1$, $J(q^s)$ with its multiplication, is isomorphic $K_{T_s\Lev_q}^{\star}(F(1))$. The maps of operads $\Lev_q\to T_s\Lev_q$ makes $K_{T_s\Lev_q}^{\star}(F(1))$ a $\star$-unstable $\Lev_q$-algebra for all $s$. Since the map $\phi:J(q^{s+1})\to J(q^s)$ sends $x_{s+1}$ to $x_s$, and is compatible with the $\Lev_q$-algebra structures, it is easy to see that it corresponds to the map $K_{T_{s+1}\Lev_q}^{\star}(F(1))\to K_{T_s\Lev_q}^{\star}(F(1))$ induced by the morphism of operads $T_{s+1}\Lev_q\to T_{s}\Lev_q$.

        Therefore, the diagram above is a diagram in $\star$-unstable $\Lev_q$-algebras induced by the diagram of operads
        \eqref{diag:cof} of \ref{rema:cof}. Hence, $K(1)$ is the limit of this diagram in the category of $\star$-unstable $\Lev_q$-algebras.
    \end{proof}
    \begin{rema}\label{remaKuhnalg}
  Proposition \ref{prop:K1} shows that the Carlsson algebra $K$ can be identified with the free $d^{X_2}$-unstable $\uCom\circ\Dp$-algebra generated by $F(1)$. Restriction along the inclusion of operads $\Pi=\uCom\circ\D\to\uCom\circ\Dp$ makes $K$ a (non-free) $d^{X_2}$-unstable $\Pi$-algebra, and the quotient of $K$ by the ideal generated by $x_i$ for all $i<0$ yields the Brown--Gitler algebra $J$, seen as a $\Pi$-algebra. As a $\Pi$-ideal, this ideal is generated by the unique element $x_{-1}=d\iota_1$.

  Similarly, when $q=2$ considering $K_{\uCom\circ\Dp}^{d^{X_2}}(F(n))$ as a $d^{X_2}$-unstable $\Pi$-algebra, and quotienting by the ideal generated by $d\iota_n$, yields a description of the algebras denoted by $H^*(T(n,*),\F_2)$ in \cite{K4}. As the notation suggests, these algebras are obtained as the cohomology of a spectrum, which is related to the Eilenberg--MacLane spaces $K(V,n)$.
\end{rema}

\end{document}